\documentclass[11pt]{article}

\usepackage[a4paper,margin=29mm]{geometry}
\usepackage{amsmath,amssymb,amsthm,mathtools}
\usepackage{array,booktabs,microtype}
\usepackage[hidelinks]{hyperref}

\allowdisplaybreaks[2]
\numberwithin{equation}{section}

\newtheorem{theorem}{Theorem}[section]
\newtheorem{lemma}[theorem]{Lemma}
\newtheorem{proposition}[theorem]{Proposition}

\theoremstyle{remark}
\newtheorem{remark}[theorem]{Remark}

\newtheorem{theoremA}{Theorem}

\newtheorem{theoremB}{Theorem}

\newcommand{\ICG}{\operatorname{ICG}}
\newcommand{\Energy}{\mathcal E}
\newcommand{\Dstar}{\mathcal D^*}
\newcommand{\one}{\mathbf 1}
\newcommand{\R}{\mathbb R}
\newcommand{\sgn}{\operatorname{sgn}}
\newcommand{\tr}{\operatorname{tr}}
\newcommand{\Span}{\operatorname{span}}
\newcommand{\eps}{\varepsilon}

\title{Energy Maximisation for Integral Circulant Graphs with Opposite-Parity Exponents}
\author{Jianwei Jiang\thanks{Email: \texttt{jianweijiangwf@sina.com}}\quad
Chunhua Yang\thanks{Corresponding author. Email: \texttt{chunhuayangwf@sina.com}}\\
\small School of Mathematics and Statistics, Weifang University,\\
\small Weifang, Shandong, China}
\date{}

\begin{document}

\maketitle

\begin{abstract}
For a finite graph $G$, its energy is the sum of the absolute values of its
adjacency eigenvalues.  Let $p$ and $q$ be distinct odd primes, with $q\geq5$,
and let $r\geq1$ and $s\geq0$.  We determine the maximum energy among all
integral circulant graphs of order $n=p^{2r}q^{2s+1}$.  The unique
energy-maximising divisor set is the checkerboard set
$\{p^iq^j:0\leq i\leq2r,\ 0\leq j\leq2s+1,\ i+j\text{ even}\}$, and its
energy is
\[
 \frac12\bigl[n+d_{2r}(p)d_{2s+1}(q)\bigr],
 \qquad
 d_k(x)=(2k+1)x^k+4\sum_{j=0}^{k-1}(-1)^{k-j}(j+1)x^j.
\]
In particular, when $r=s=1$, our theorem establishes the conjectured
maximality of Rold\'an's checkerboard divisor set for $q\geq5$ and recovers
his closed-form energy formula.  The main ingredient is a semidefinite parity theorem for weighted prime-power
Ramanujan transforms, proved by a parity-independent congruence reduction
and a block Schur recurrence.  Centring the divisor matrix then yields a sharp
sign-matrix inequality from semidefinite bounds for a Kronecker-product
operator.  Analysing equality identifies the checkerboard pattern and proves
uniqueness.
\end{abstract}

\noindent\textbf{Keywords.}
Graph energy; integral circulant graph; Ramanujan sum; spectral graph theory;
Kronecker product; Schur complement.

\noindent\textbf{2020 Mathematics Subject Classification.}
05C50; 11A25; 15A18.

\section{Introduction}

For a finite graph $G$ with adjacency eigenvalues
$\lambda_1,\ldots,\lambda_n$, its energy is
\[
 \Energy(G)=\sum_{j=1}^n |\lambda_j|.
\]
This invariant was introduced by Gutman \cite{gutman1978energy}.  An
integral circulant graph, or gcd-graph, is determined by an integer $n$ and a
nonempty set $\mathcal D$ of proper divisors of $n$: its vertex set is
$\mathbb Z/n\mathbb Z$, and distinct vertices $u,v$ are adjacent when
$\gcd(u-v,n)\in\mathcal D$.  We write $\ICG_n(\mathcal D)$ for this graph.
The integral circulant graphs are precisely the integral graphs among circulant
graphs in the standard divisor-set description; see So \cite{so2006integral}.

The energy of integral circulant graphs has been studied in a number of
arithmetic and extremal settings.  Ili\'c and Ba\v{s}i\'c obtained general
closed-form energy expressions and results on minimum energy
\cite{ilicbasic2011energy}.  For prime-power orders, Sander and Sander
derived an explicit energy formula and studied the corresponding extremal
problem \cite{sander2011energy}.  Subsequent work gave bounds and structural
information for maximal energy \cite{sander2011maximal}, including the
fixed-size divisor-set problem \cite{sander2012classes}, culminating in the
exact determination of all maximising divisor sets and the maximal energy
\cite{sander2013exact}.  Beyond prime powers, Le and Sander developed a multiplicative
Ramanujan-sum framework for integral circulant graphs
\cite{le2012convolutions} and used multiplicative divisor sets to obtain bounds
for extremal energies of arbitrary orders \cite{le2012extremal}.

More recently, Rold\'an considered the two-prime order $p^2q^3$.  He singled out the divisor set
\begin{equation}\label{eq:intro-roldan-set}
 \{1,p^2,pq,q^2,p^2q^2,pq^3\},
\end{equation}
identified a Kronecker separation in its eigenvalue structure, obtained its
energy in closed form, and conjectured that it uniquely maximises the energy
for every pair of distinct odd primes
\cite[Theorems~1.1--1.2 and Conjecture~1.3]{roldan2026graph}.  This raises two related questions: whether the
checkerboard pattern is indeed globally extremal, and whether the same pattern
persists beyond the exponent pair $(2,3)$.

For orders with two prime factors, a general divisor set need not separate into
independent choices on the two prime-power coordinates.  In the binary-matrix
encoding used below, the energy problem is therefore a coupled extremal
problem over $0$--$1$ matrices rather than a product of two one-dimensional
optimisations.  The checkerboard set in \eqref{eq:intro-roldan-set} suggests
that parity, rather than multiplicativity, may control this coupled problem.

We resolve both questions for an infinite two-parameter family.  For every
order $p^{2r}q^{2s+1}$ in our parameter range, we determine the global maximum
of the energy over all nonempty divisor sets, prove that the checkerboard set
is the unique maximiser, and obtain the maximal energy in closed form.  In
particular, taking $r=s=1$ establishes Rold\'an's conjectured maximality
throughout the range $q\geq5$ and recovers his closed-form energy formula.  The
main structural input is a semidefinite parity theorem for weighted
prime-power Ramanujan transforms, with complementary signs for even and odd
exponents.  Combining these two parity regimes yields a sharp sign-matrix
bound through a semidefinite inequality for a Kronecker-product operator.
Analysis of the equality case restricts the associated sign matrix to the two
checkerboard sign patterns; the proper-divisor constraint selects the admissible
one, yielding the global maximum and its unique maximiser.
The exceptional value $q=3$ is not covered by our theorem, since the uniform
odd-exponent semidefinite argument used here does not extend to that value.

We first state the graph-theoretic result.

\begin{theoremA}\label{thm:main}
Let $p,q$ be distinct odd primes with $q\geq5$, let $r\geq1$ and $s\geq0$,
and put $n=p^{2r}q^{2s+1}$.  Among all nonempty sets of proper divisors of
$n$, the energy of $\ICG_n(\mathcal D)$ is uniquely maximised by
\begin{equation}\label{eq:main-Dstar}
 \Dstar_{r,s}=\{p^iq^j:0\leq i\leq2r,\ 0\leq j\leq2s+1,
                     \ i+j\equiv0\pmod2\}.
\end{equation}
Writing $\Energy_{\max}(n)$ for this maximum, we have
\begin{equation}\label{eq:main-energy}
 \Energy_{\max}(n)=\frac12\bigl[n+d_{2r}(p)d_{2s+1}(q)\bigr],
\end{equation}
where
\begin{equation}\label{eq:intro-dk}
 d_k(x)=(2k+1)x^k+4\sum_{j=0}^{k-1}(-1)^{k-j}(j+1)x^j.
\end{equation}
\end{theoremA}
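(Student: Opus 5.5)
We follow the route announced in the abstract: encode divisor sets as $0$--$1$ matrices, write the energy as an $\ell^1$ norm of a Kronecker-product transform, and bound this norm through a semidefinite inequality.

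\textbf{Step 1: encoding.} For $\mathcal D\subseteq\{p^iq^j\}$, let $X\in\{0,1\}^{(2r+1)\times(2s+2)}$ with $X_{ij}=1$ iff $p^iq^j\in\mathcal D$. The entry $X_{2r,2s+1}$ must be $0$, since $n$ itself is not a proper divisor. The eigenvalues of $\ICG_n(\mathcal D)$ are sums of Ramanujan sums $c(t,n/d)$ over $d\in\mathcal D$. Ramanujan sums are multiplicative in the modulus, and $c(t,\cdot)$ at a prime power depends only on the exponent of the prime in $\gcd(t,n)$. Hence the eigenvalue attached to $\gcd(t,n)=p^aq^b$ is $(A X B^{T})_{ab}$, where $A=A_{2r}(p)$ and $B=B_{2s+1}(q)$ are the prime-power Ramanujan matrices. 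This eigenvalue occurs with multiplicity $\varphi(p^{2r-a})\varphi(q^{2s+1-b})$. Absorbing the multiplicities gives weight matrices, and
\[
\Energy = \sum_{a,b} w_a(p)\,w_b(q)\,\bigl|(AXB^T)_{ab}\bigr| .
\]

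\textbf{Step 2: dualisation and centring.} Write $|y|=\max_{\sigma=\pm1}\sigma y$. Then
\[
\Energy=\max_S \langle S,\; W_p A X B^T W_q\rangle,
\]
where $S$ ranges over sign matrices and $W_p,W_q$ are the diagonal weight matrices. Next substitute $X=\tfrac12(J+Y)$, where $Y$ is a $\pm1$ matrix and $J$ is the all-ones matrix. This splits the energy into a term linear in $S$ coming from $J$ and a bilinear term in $S$ and $Y$. The $J$-term collapses: the transform of the all-ones divisor pattern is essentially $n$ in one coordinate and $0$ elsewhere. This produces the $\tfrac n2$ summand. Some care is needed with the excluded corner entry $X_{2r,2s+1}$; we first drop that constraint and reinstate it at the end.

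\textbf{Step 3: the key inequality.} We must show that $\langle S, W_pAYB^TW_q\rangle$ is at most $d_{2r}(p)\,d_{2s+1}(q)$ for all sign matrices $S,Y$. Written as a bilinear form in the vectorised variables, this form is governed by $M=K_p\otimes K_q$, where $K_p=W_pA$ and $K_q=W_qB$ are the weighted Ramanujan transforms. The parity theorem stated in the paper gives complementary semidefiniteness:
\begin{itemize}
\item for the even exponent $2r$, the symmetrised form $D_pK_p$ satisfies $d_{2r}(p)\,\Pi_p - D_pK_p\succeq0$ in a suitable sense;
\item for the odd exponent $2s+1$, the analogous statement holds with the opposite sign, using $q\ge5$.
\end{itemize}
Here $D_p$ denotes the alternating sign diagonal and $\Pi_p$ the corresponding rank-one checkerboard matrix. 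Taking Kronecker products of these two semidefinite certificates, with the sign flip cancelling, gives a PSD certificate for $d_{2r}(p)d_{2s+1}(q)\,C-M$, where $C$ is built from checkerboards. Testing this certificate against the vectors $S$ and $Y$ yields the bound, since a product of two PSD Kronecker factors is PSD. We expect this step to be the main obstacle. The hard part is choosing the exact normalisation of the two certificates so that their Kronecker combination stays semidefinite despite the opposite signs, and so that the bound is attained exactly by sign vectors. We would try writing each certificate as $v v^T+P$ with $v$ the checkerboard vector and $P\succeq0$. In the Kronecker product the cross terms $vv^T\otimes P$ and $P\otimes vv^T$ must then be controlled, so we would want these cross terms to vanish on $\pm1$ vectors.

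\textbf{Step 4: equality and uniqueness.} Equality forces $Y$, and likewise $S$, into the kernel of the PSD remainder intersected with the $\pm1$ vectors. Using the strict definiteness supplied by the parity theorem off the checkerboard direction, this leaves only $Y=\pm$ checkerboard. The corner entry $(2r,2s+1)$ has odd index sum $2r+2s+1$. So the checkerboard with $X_{ij}=1$ exactly when $i+j$ is even has corner entry $0$, which is admissible. Its complement is excluded, because it contains $n$. This identifies $\Dstar_{r,s}$ as the unique maximiser. Finally, reinserting the relaxed corner constraint only shrinks the feasible set and $\Dstar_{r,s}$ remains feasible. Evaluating the energy of $\Dstar_{r,s}$ directly, or reading it off from the equality case, gives $\tfrac12[n+d_{2r}(p)d_{2s+1}(q)]$. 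Checking that $d_k(x)$ matches the weighted checkerboard sum is a routine geometric-series computation.
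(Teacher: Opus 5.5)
Your Steps 1 and 2 and the outline of Step 4 match the paper: energy $=\lVert MXN^{\mathsf T}\rVert_1$ with $M=T_{2r}(p)$, $N=T_{2s+1}(q)$; centring $X=\frac12(\mathbf J+Y)$ with the $\mathbf J$-term collapsing to $n\,e_ae_b^{\mathsf T}$; and the corner entry selecting the sign. But Step 3, which carries the whole theorem, is left open, and the certificate you propose is the wrong kind of object. The parity theorem is not about $d_k\,ss^{\mathsf T}$ minus a symmetrised $D_pK_p$; note that $K_p=W_pA=T_k$ is already symmetric. It concerns a positive \emph{diagonal} matrix $\Delta_k(x)$ with $\tr\Delta_k=d_k$ and $T_ks_k=(-1)^k\Delta_ks_k$, such that $\Delta_k-(-1)^kT_k\succeq0$ with kernel $\Span\{s_k\}$ and $\Delta_k+(-1)^kT_k\succ0$. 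Diagonality is essential, because $y^{\mathsf T}\Delta y=\tr\Delta$ for every sign vector $y$. That is what turns a matrix inequality into a bound uniform over all sign patterns. With a rank-one checkerboard, $y^{\mathsf T}ss^{\mathsf T}y=(s^{\mathsf T}y)^2$ depends on $y$, so no such bound follows. You also need semidefiniteness of \emph{both} $\Delta_k\pm T_k$ for each prime, not one sign per factor.

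Two further ideas are missing. First, $\langle S,MYN^{\mathsf T}\rangle=z^{\mathsf T}Ky$, with $K=N\otimes M$ and $z,y$ the vectorised $S,Y$, is bilinear in two independent sign vectors. A single PSD matrix therefore cannot simply be ``tested against $S$ and $Y$''. The paper uses the polarisation identity
\[
 \tr\Delta-z^{\mathsf T}Ky=\tfrac14\bigl[(z-y)^{\mathsf T}(\Delta+K)(z-y)+(z+y)^{\mathsf T}(\Delta-K)(z+y)\bigr],\qquad \Delta=\Delta_q\otimes\Delta_p,
\]
which requires $\Delta+K\succeq0$ and $\Delta-K\succeq0$ simultaneously. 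Second, the obstruction you anticipate in combining opposite-sign certificates is not resolved by a $vv^{\mathsf T}+P$ splitting with cross terms vanishing on $\pm1$ vectors. It is resolved by the exact identity
\[
 \Delta\pm K=\tfrac12\bigl[(\Delta_q+N)\otimes(\Delta_p\pm M)+(\Delta_q-N)\otimes(\Delta_p\mp M)\bigr],
\]
in which the cross terms $\Delta_q\otimes M$ and $N\otimes\Delta_p$ cancel. Every factor here is semidefinite, and intersecting kernels gives $\ker(\Delta+K)=\Span\{s_b\otimes s_a\}$ and $\Delta-K\succ0$. Equality then forces $z=-y$ and $y=\pm s_b\otimes s_a$, which is what your Step 4 needs.

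Finally, attainment requires checking that the centring triangle inequality is tight for the checkerboard, i.e.\ that the $(a,b)$ entry of $(Ms_a)(Ns_b)^{\mathsf T}$ is positive. This holds because $(Ms_a)_a=\delta_a>0$ and $(Ns_b)_b=\delta_b>0$. It is a place where the opposite parity genuinely matters: with equal parities the admissible sign would produce cancellation at that corner.
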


Theorem~\ref{thm:main} rests on the following prime-power parity theorem.
For $k\geq1$ and $x>1$, let $T_k(x)$ be the weighted prime-power Ramanujan
transform defined in Section~\ref{sec:transforms},
let $s_k=(1,-1,\ldots,(-1)^k)^{\mathsf T}$, and let $\Delta_k(x)$ be the
diagonal matrix in \eqref{eq:delta-definition}.

\begin{theoremB}\label{thm:parity}
For even $k\geq2$ and $x\geq3$,
\[
 \Delta_k(x)-T_k(x)\succeq0,\qquad
 \ker(\Delta_k(x)-T_k(x))=\Span\{s_k\},\qquad
 \Delta_k(x)+T_k(x)\succ0.
\]
For odd $k\geq1$ and $x\geq5$,
\[
 \Delta_k(x)+T_k(x)\succeq0,\qquad
 \ker(\Delta_k(x)+T_k(x))=\Span\{s_k\},\qquad
 \Delta_k(x)-T_k(x)\succ0.
\]
In both cases,
\[
 T_k(x)s_k=(-1)^k\Delta_k(x)s_k.
\]
\end{theoremB}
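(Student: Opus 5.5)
The plan is to prove everything after passing to the normalised matrix $M_k(x)=\Delta_k(x)^{-1/2}T_k(x)\Delta_k(x)^{-1/2}$. This presumes that $\Delta_k(x)$ has positive diagonal for $x>1$, which I expect from \eqref{eq:delta-definition} but have not seen. The semidefinite statements then say that the spectrum of $M_k(x)$ lies in $[-1,1]$. For even $k$, the value $+1$ should be attained exactly on $\Delta_k^{1/2}s_k$ and $-1$ not at all; for odd $k$ the roles of $\pm1$ are swapped.

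\textbf{Step 1: the eigenvector identity.} I would first verify $T_k(x)s_k=(-1)^k\Delta_k(x)s_k$ by direct computation. I will use the explicit prime-power Ramanujan sums $c_{x^i}(x^j)$: they equal $\varphi(x^i)$ for $i\le j$, equal $-x^{i-1}$ for $i=j+1$, and vanish otherwise. Entry by entry, the alternating sum $\sum_j(-1)^j(\,\cdot\,)$ telescopes, because consecutive entries in a column largely cancel. This leaves a single boundary term, which should match the diagonal entry of $\Delta_k$ up to the sign $(-1)^k$. This gives the stated kernel inclusion $s_k\in\ker(\Delta_k\mp T_k)$ with the correct sign in each parity.

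\textbf{Step 2: a parity-independent congruence.} I would choose a unit lower-triangular matrix $L_k$ whose first column is $s_k$, or which maps $e_0$ to $s_k$, for instance the alternating-difference matrix. Forming $L_k^{\mathsf T}(\Delta_k\mp T_k)L_k$ then splits off the $s_k$ direction as a zero row and column. The remaining $k\times k$ block $B_k^{\mp}(x)$ should be nearly banded, since first differences of the step-shaped Ramanujan columns are concentrated near the diagonal. Because the reduction uses only the congruence, it is the same for both parities.

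\textbf{Step 3: the block Schur recurrence.} The claim is that $B_k^{\mp}\succ0$ for the appropriate sign, together with positivity of $\Delta_k\pm T_k$ for the opposite sign. I would prove both by eliminating the two bottom indices and taking Schur complements. The aim is a recurrence expressing the complement as $B_{k-2}$ plus an explicit perturbation that is a positive semidefinite rank-one or diagonal correction. The step from $k$ to $k-2$ preserves parity, so the two parities become two separate inductions. Their bases are $k=2$ (a $3\times3$ check, needing $x\ge3$) and $k=1$ (a $2\times2$ check, needing $x\ge5$). The rank-one nature of Step 1 should then give exactly $\ker=\Span\{s_k\}$.

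For the opposite sign, I would run the same recursion on $\Delta_k\pm T_k$ directly, with no kernel to split off. Alternatively, I would show that $-1$ (respectively $+1$) is not an eigenvalue of $M_k$. To do this I would bound the $2\times2$ pivot blocks by their determinants.

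\textbf{Main obstacle.} The hardest part will be controlling the perturbation in the Schur recurrence uniformly in $k$. The constraints $x\ge3$ and $x\ge5$ must enter as explicit inequalities such as $x^2-4x+3\ge0$, or a $q$-analogue failing at $q=3$. My first attempt would be to guess the Schur complement in closed form from small cases $k\le4$ and then verify the guessed recurrence symbolically.
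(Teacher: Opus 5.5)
\textbf{There is a genuine gap: Step~3 is a hope, not a proof.} Your Step~1 is the paper's parity lemma. Splitting off the $s_k$ direction by a unit-triangular congruence is a legitimate substitute for the paper's device of deducing singularity of the last Schur block from that lemma.

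But Step~3 carries the entire content of the theorem. You give no recurrence, no closed form and no positivity criterion, only the expectation that the complement is ``$B_{k-2}$ plus a PSD correction'', to be guessed from $k\le4$.

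The paper's missing ingredients are specific:
\begin{enumerate}
\item \emph{Hankel form.} Conjugating by the index reversal and the diagonal scaling $W_k^{-1}$, with $W_k=\operatorname{diag}(1,x-1,x(x-1),\dots,x^{k-1}(x-1))$, turns $T_k$ into an exact Hankel matrix. Its entries are $1$, $-1/(x-1)$ or $0$ according as $i+j\le k$, $i+j=k+1$ or $i+j\ge k+2$, and $\Delta_k$ stays diagonal.
\item \emph{Opposite-endpoint pairing.} First differencing then gives a tridiagonal matrix plus anti-diagonal bands $i+j\in\{k-1,k,k+1\}$. So the result is not ``nearly banded''. The workable elimination pairs opposite indices $\{h,k-h\}$, not the two bottom ones, and this makes the matrix block tridiagonal with $2\times2$ blocks.
\item \emph{Closed-form Schur complements.} The successive Schur complements have a universal closed form with determinants $F_{h+1}A_{k,h+1}/\bigl(x^{k-1}(x-1)^2F_h\bigr)$. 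Here $R_i=(x^{i+1}+(-1)^i)/(x+1)$, $F_i=2R_i-x^i$, and $A_{k,h}=x^k-4R_{h-1}R_{k-h}$ is the tent polynomial.
\item \emph{Tent positivity.} Positivity of $A_{k,h}$ for all heights $h$ is exactly where $x\ge3$ (even $k$) and $x\ge5$ (odd $k$) enter.
\item \emph{Endpoints.} For the singular sign, the last block is rank one (odd $k$) or the final scalar is $0$ (even $k$). For the other sign, the last block has determinant $2A_{k,m+1}/\bigl(x^m(x-1)F_m\bigr)>0$ (odd $k$), or the final scalar is $2$ (even $k$).
\end{enumerate}

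\textbf{Your plan also puts the thresholds in the base cases, which cannot be right.} In the congruent form above, the principal $2\times2$ block on $\{0,k\}$ has determinant $(x-2)A_{k,1}(x)/\bigl(x^{k-1}(x-1)^2\bigr)$, and $A_{2m,1}(x)=\bigl(x^{2m}(x-3)+4\bigr)/(x+1)$.
\begin{itemize}
\item For $k=2$ the even statement holds for every $x>2$: $\det(\Delta_2+T_2)=2x(x-1)^2(x-2)^3$, and the leading $2\times2$ minor of $\Delta_2-T_2$ is $x(x-1)^2(x-2)$. So your base case does not use $x\ge3$.
\item Yet for each fixed $x\in(2,3)$, both $\Delta_k\pm T_k$ are indefinite once $k$ is even and large.
\item Similarly, $A_{k,1}(3)=-1$ makes both $\Delta_k\pm T_k$ indefinite at $x=3$ for every odd $k\ge3$.
\end{itemize}
Hence no inductive step of the form ``$(k-2)$-instance plus a PSD correction'' valid for all $x>2$ can exist. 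The step must consume an $x$-inequality that holds uniformly in $k$, namely positivity of a whole family of degree-$k$ tent polynomials. That is precisely the lemma your plan lacks.

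Nothing in your proposal relates the partially eliminated matrix to $\Delta_{k-2}\pm T_{k-2}$. In the paper, the Schur data after eliminating $h$ pairs involve $A_{k,h+1}$, which depends on $k$ and $h$ separately rather than on $k-2h$ alone. So such an identification would itself have to be constructed.

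Two minor points. First, $\Delta_k$ is positive only in the stated ranges; for example $\delta_1=x-2$ when $k=1$. Second, showing that $\mp1$ is not an eigenvalue of $M_k$ does not by itself give definiteness; you would also need a separate bound on the spectrum.
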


Theorem~\ref{thm:parity} is proved in Section~\ref{sec:parity} by a
parity-independent congruence reduction followed by a block Schur recurrence;
parity enters only at the endpoint.  In Section~\ref{sec:energy} we centre a
binary divisor matrix and combine the even and odd cases by a Kronecker-product
argument to prove Theorem~\ref{thm:main}.

\section{Ramanujan transforms and the matrix energy model}
\label{sec:transforms}

\subsection{The weighted prime-power Ramanujan transform}

For integers $m\geq1$ and $u$, write
\[
 c_m(u)=\sum_{\substack{1\leq h\leq m\\(h,m)=1}}
          \exp(2\pi \mathrm{i}hu/m)
\]
for the Ramanujan sum, where $\mathrm{i}^2=-1$.  For standard properties used below, see, for example,
Apostol \cite[Ch.~8]{apostol1976analytic}.  If $\ell$ is prime, then for
integers $a\geq1$ and $i\geq0$,
\begin{equation}\label{eq:ramanujan-prime-power}
 c_{\ell^a}(\ell^i)=
 \begin{cases}
  \ell^{a-1}(\ell-1),&a\leq i,\\
  -\ell^{a-1},&a=i+1,\\
  0,&a\geq i+2,
 \end{cases}
 \qquad c_1(\ell^i)=1.
\end{equation}

For a prime $x$, exponent $k\geq1$, and $0\leq i,j\leq k$, the weighted
prime-power Ramanujan transform is initially defined by
\begin{equation}\label{eq:T-ramanujan-definition}
 (T_k(x))_{ij}=\varphi(x^{k-i})c_{x^{k-j}}(x^i).
\end{equation}
Here $\varphi$ denotes Euler's totient function. The weighting is chosen to
incorporate the spectral multiplicities in the energy model below: the factor
$c_{x^{k-j}}(x^i)$ records the Ramanujan-sum contribution, while
$\varphi(x^{k-i})$ provides the corresponding multiplicity. Substituting the
prime-power formula \eqref{eq:ramanujan-prime-power} into
\eqref{eq:T-ramanujan-definition} yields the explicit polynomial expressions
in the following proposition. We use these expressions to define $T_k(x)$
for every real $x>1$.

\begin{proposition}\label{prop:T-explicit}
Let $T_k(x)=(t_{ij})_{0\leq i,j\leq k}$.  If $i,j<k$, then
\begin{equation}\label{eq:T-interior}
 t_{ij}=\begin{cases}
  0,&i+j\leq k-2,\\[1mm]
  -x^{k-1}(x-1),&i+j=k-1,\\[1mm]
  x^{2k-i-j-2}(x-1)^2,&i+j\geq k.
 \end{cases}
\end{equation}
The boundary entries are
\begin{equation}\label{eq:T-boundary}
 t_{i,k}=t_{k,i}=x^{k-i-1}(x-1)\quad(0\leq i<k),
 \qquad t_{kk}=1.
\end{equation}
In particular,
\begin{equation}\label{eq:T-symmetric-rowsum}
 T_k(x)^{\mathsf T}=T_k(x),\qquad T_k(x)\one=x^ke_k,
\end{equation}
where $e_k$ is the last standard basis vector.
\end{proposition}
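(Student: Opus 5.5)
The plan is to substitute \eqref{eq:ramanujan-prime-power} directly into \eqref{eq:T-ramanujan-definition} and sort the cases by comparing $a=k-j$ with $i$.

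\textbf{Interior entries} ($i,j<k$). Here $a=k-j\geq1$ and $\varphi(x^{k-i})=x^{k-i-1}(x-1)$. The three cases of the Ramanujan sum translate as follows.
\begin{itemize}
\item The condition $a\geq i+2$ is $i+j\leq k-2$, and then the entry is $0$.
\item The condition $a=i+1$ is $i+j=k-1$. The entry is $x^{k-i-1}(x-1)\cdot(-x^{k-j-1})$, and the exponent $2k-i-j-2$ equals $k-1$. This gives $-x^{k-1}(x-1)$.
\item The condition $a\leq i$ is $i+j\geq k$. The entry is $x^{k-i-1}(x-1)\cdot x^{k-j-1}(x-1)=x^{2k-i-j-2}(x-1)^2$.
\end{itemize}

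\textbf{Boundary entries.}
\begin{itemize}
\item For $j=k$ we have $c_1=1$, so $t_{i,k}=\varphi(x^{k-i})=x^{k-i-1}(x-1)$ when $i<k$, and $t_{kk}=\varphi(1)=1$.
\item For $i=k$ with $j<k$, we have $\varphi(1)=1$ and $a=k-j\leq k=i$. Hence $t_{k,j}=x^{k-j-1}(x-1)$, which matches $t_{j,k}$.
\end{itemize}

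\textbf{Symmetry.} Every formula above depends on $i,j$ only through $i+j$ or symmetrically, so $T_k^{\mathsf T}=T_k$. Since these polynomial expressions are taken as the definition for real $x>1$, symmetry holds identically.

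\textbf{Row sums.} This is the only mildly computational step.
\begin{itemize}
\item Row $k$: the sum is $1+\sum_{j<k}x^{k-j-1}(x-1)=1+(x^k-1)=x^k$, a telescoping sum.
\item Row $i<k$: the nonzero interior entries have $j\geq k-1-i$. The sum is
\[
-x^{k-1}(x-1)+(x-1)^2\sum_{j=k-i}^{k-1}x^{2k-i-j-2}+x^{k-i-1}(x-1).
\]
The geometric sum has exponents running from $k-i-1$ to $k-2$. Multiplying by $(x-1)$ turns it into $x^{k-1}-x^{k-i-1}$, so the middle term equals $(x-1)(x^{k-1}-x^{k-i-1})$. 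This cancels exactly against the other two terms, giving $0$.
\item Edge case $i=0$: the middle sum is empty, and $-x^{k-1}(x-1)+x^{k-1}(x-1)=0$.
\end{itemize}

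Hence $T_k(x)\one=x^ke_k$. The main care is in getting the index ranges of these sums right; this is a polynomial identity, so it holds for all real $x>1$.
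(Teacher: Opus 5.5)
Your proof is correct. The entry computations are the same as the paper's: you match the three Ramanujan-sum cases to $i+j\le k-2$, $i+j=k-1$, $i+j\ge k$, evaluate the boundary via $c_1=1$ and $\varphi(1)=1$, and read off symmetry from the formulas.

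The route differs only for the row-sum identity $T_k(x)\one=x^ke_k$. The paper does not sum the explicit entries. It fixes a prime $x$ and writes the $i$th row sum as $\varphi(x^{k-i})\sum_{d\mid x^k}c_d(x^i)$. It then invokes the classical identity $\sum_{d\mid n}c_d(m)=n$ if $n\mid m$ and $0$ otherwise, which gives $x^k$ for $i=k$ and $0$ for $i<k$. Finally it extends to all real $x>1$ because both sides are polynomials agreeing at infinitely many primes.

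You instead sum the explicit polynomial entries directly. Row $k$ telescopes to $x^k$, and in rows $i<k$ the geometric sum $(x-1)^2\sum x^{2k-i-j-2}=(x-1)(x^{k-1}-x^{k-i-1})$ cancels the two outer terms exactly. Your index ranges and the $i=0$ and $k=1$ cases check out.

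What each buys:
\begin{itemize}
\item Your computation is self-contained and valid for every real $x$ at once. It needs neither the Ramanujan divisor-sum identity nor the extension-from-primes step, so your closing remark about polynomial identities is redundant but harmless.
\item The paper's argument avoids the index bookkeeping. It also explains why the rows vanish: this is the orthogonality of Ramanujan sums over the divisors of $x^k$.
\end{itemize}
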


\begin{proof}
The three alternatives in \eqref{eq:T-interior} correspond respectively to
$k-j\geq i+2$, $k-j=i+1$, and $k-j\leq i$ in
\eqref{eq:ramanujan-prime-power}.  Multiplying by
$\varphi(x^{k-i})=x^{k-i-1}(x-1)$ gives the displayed entries whenever
$i<k$. For the boundary entries, if
$0\leq i<k$, then
\[
 t_{i,k}=\varphi(x^{k-i})c_1(x^i)=x^{k-i-1}(x-1),
 \qquad
 t_{k,i}=c_{x^{k-i}}(x^k)=\varphi(x^{k-i}),
\]
and $t_{kk}=1$.  Thus \eqref{eq:T-boundary} holds, and together with
\eqref{eq:T-interior} it shows that $T_k(x)$ is symmetric.

For the row sum, we use the standard Ramanujan-sum identity
\[
 \sum_{d\mid n}c_d(m)=
 \begin{cases}
  n,&n\mid m,\\
  0,&n\nmid m.
 \end{cases}
\]
As $j$ ranges from $0$ to $k$, the moduli $x^{k-j}$ run through all
divisors of $x^k$.  Thus, for prime $x$, the $i$th row sum is
\[
 \varphi(x^{k-i})\sum_{d\mid x^k}c_d(x^i),
\]
and hence, for every prime $x$,
\[
 T_k(x)\mathbf 1=x^k e_k.
\]
Each coordinate on both sides is a polynomial in $x$.  Since this identity
holds for infinitely many prime values of $x$, it is a polynomial identity
and therefore holds for every real $x>1$.
\end{proof}

\subsection{The exact energy model}

Let $p$ and $q$ be distinct primes, let $a,b\geq1$, put $n=p^aq^b$,
and encode a divisor set by the binary divisor matrix
\begin{equation}\label{eq:X-definition}
 X=(x_{ij})\in\{0,1\}^{(a+1)\times(b+1)},\qquad
 x_{ij}=1\Longleftrightarrow p^iq^j\in\mathcal D.
\end{equation}
The proper-divisor condition is exactly $x_{ab}=0$. For a real matrix $Z=(z_{ij})$, write
$\lVert Z\rVert_1=\sum_{i,j}|z_{ij}|$ for its entrywise $\ell_1$-norm.

\begin{proposition}\label{prop:energy-model}
For $M=T_a(p)$ and $N=T_b(q)$,
\begin{equation}\label{eq:exact-matrix-energy}
 \Energy\bigl(\ICG_n(\mathcal D)\bigr)=\lVert MXN^{\mathsf T}\rVert_1.
\end{equation}
\end{proposition}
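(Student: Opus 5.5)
We need to prove that $\Energy(\ICG_n(\mathcal D))=\lVert MXN^{\mathsf T}\rVert_1$. The plan is to write down the eigenvalues of the gcd-graph, which are sums of Ramanujan sums, group them by the gcd of the index with $n$, and match the multiplicities with the $\varphi$-weights built into $T_a(p)$ and $T_b(q)$.

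\emph{Eigenvalues.} For $\ICG_n(\mathcal D)$ the eigenvalues are
\[
 \lambda_t=\sum_{d\in\mathcal D}c_{n/d}(t),\qquad t\in\mathbb Z/n\mathbb Z,
\]
which is the standard fact from So \cite{so2006integral} and Ili\'c--Ba\v{s}i\'c \cite{ilicbasic2011energy}. It follows because the adjacency matrix is circulant with connection set $\bigcup_{d\in\mathcal D}\{u:\gcd(u,n)=d\}$, and $\sum_{\gcd(u,n)=d}e^{2\pi\mathrm i ut/n}=c_{n/d}(t)$.

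\emph{Grouping by gcd.} The value $c_m(t)$ depends only on $\gcd(t,m)$, so $\lambda_t$ depends only on $g=\gcd(t,n)=p^iq^j$. The number of such $t$ is $\varphi(n/g)=\varphi(p^{a-i})\varphi(q^{b-j})$. Ramanujan sums are multiplicative in the modulus for coprime moduli, so for $d=p^kq^l$ and $t$ with $\gcd(t,n)=p^iq^j$ we get
\[
 c_{n/d}(t)=c_{p^{a-k}}(t)\,c_{q^{b-l}}(t).
\]
The first factor depends only on the $p$-adic valuation of $t$ capped at $a$, which gives $c_{p^{a-k}}(p^i)$, and similarly the second gives $c_{q^{b-l}}(q^j)$. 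Here one must check that replacing $t$ by $p^i$ is legitimate: $\gcd(t,p^{a-k})=p^{\min(i,a-k)}$, and the prime-power formula \eqref{eq:ramanujan-prime-power} depends only on this quantity. This identification is the one point needing care; the rest is bookkeeping.

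\emph{Assembling.} From the two previous steps,
\[
 \Energy=\sum_{i,j}\varphi(p^{a-i})\varphi(q^{b-j})\Bigl|\sum_{k,l}c_{p^{a-k}}(p^i)\,x_{kl}\,c_{q^{b-l}}(q^j)\Bigr|.
\]
The multiplicity factors are positive, so they can be moved inside the absolute value. Using \eqref{eq:T-ramanujan-definition}, $\varphi(p^{a-i})c_{p^{a-k}}(p^i)=M_{ik}$ and $\varphi(q^{b-j})c_{q^{b-l}}(q^j)=N_{jl}$. The inner expression is therefore $(MXN^{\mathsf T})_{ij}$, and summing the absolute values over $(i,j)$ gives $\lVert MXN^{\mathsf T}\rVert_1$.

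Two remarks on the setup:
\begin{itemize}
 \item Since $p$ and $q$ are primes, the defining formula \eqref{eq:T-ramanujan-definition} applies directly and Proposition~\ref{prop:T-explicit} is not needed.
 \item The correspondence $x_{kl}=1\iff p^kq^l\in\mathcal D$ is exactly \eqref{eq:X-definition}, and the proper-divisor condition $x_{ab}=0$ is not used in this step.
\end{itemize}
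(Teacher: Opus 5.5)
Your proof is correct and follows essentially the same route as the paper: So's eigenvalue formula, multiplicativity of Ramanujan sums, grouping the spectral parameters by $\gcd(t,n)=p^iq^j$ with multiplicity $\varphi(p^{a-i})\varphi(q^{b-j})$, and absorbing these weights into $T_a(p)$ and $T_b(q)$. This grouping is exactly the paper's truncated-valuation layers $(u,v)$, and the differences are only cosmetic: you justify $c_{p^{a-k}}(t)=c_{p^{a-k}}(p^i)$ by dependence on the gcd rather than by the paper's unit-permutation argument, and you count each layer directly as $\varphi(n/g)$ rather than via the Chinese remainder theorem (one small correction to your closing remark: properness of $\mathcal D$, i.e.\ $x_{ab}=0$, is in fact used implicitly, since it is what keeps $0$ out of your connection set).
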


\begin{proof}
By the work of So \cite{so2006integral}, the adjacency eigenvalues of
\(\ICG_n(\mathcal D)\) can be expressed as
\begin{equation}\label{eq:eigenvalue-ramanujan}
\lambda_t
=
\sum_{d\in\mathcal D} c_{n/d}(t)
=
\sum_{p^i q^j\in\mathcal D}
c_{p^{a-i}}(t)c_{q^{b-j}}(t),
\qquad
t\in\mathbb Z/n\mathbb Z.
\end{equation}
The second equality follows from the multiplicativity of Ramanujan sums
for coprime moduli.

Choose an integer representative of $t$.  The truncated valuations below
are independent of this choice.  Here $v_p$ and $v_q$ denote the usual
$p$-adic and $q$-adic valuations, with the convention
$v_p(0)=v_q(0)=\infty$.  Put
\[
 u=\min\{v_p(t),a\},\qquad v=\min\{v_q(t),b\}.
\]
In particular, $t=0$ corresponds to $(u,v)=(a,b)$.  If $u<a$, write
the $p$-part of $t$ as $p^u$ times a unit; multiplication by that unit modulo
a $p$-power permutes the reduced residue classes and hence leaves the
Ramanujan sum unchanged.  If $u=a$, both $t$ and $p^a$ are divisible by every
modulus $p^{a-i}$ occurring below.  The same argument applies on the
$q$-side.  Consequently
\[
 c_{p^{a-i}}(t)=c_{p^{a-i}}(p^u),\qquad
 c_{q^{b-j}}(t)=c_{q^{b-j}}(q^v).
\]
Thus the eigenvalue is constant on the set of spectral parameters with the
same truncated valuation pair $(u,v)$; we call this the $(u,v)$-layer.
If $u<a$, the number of residue classes modulo $p^a$ with $p$-adic valuation
$u$ is $\varphi(p^{a-u})$, while for $u=a$ there is only the zero class,
whose count is $1=\varphi(1)$.  Thus the count is uniformly
$\varphi(p^{a-u})$, and similarly it is $\varphi(q^{b-v})$ on the
$q$-side.  By the Chinese remainder theorem, the multiplicity of the
$(u,v)$-layer is therefore
\[
 \varphi(p^{a-u})\varphi(q^{b-v}).
\]
Write $\lambda_{u,v}$ for the common eigenvalue on this layer. By the definition of \(T_k\), \eqref{eq:X-definition}, and the eigenvalue formula \eqref{eq:eigenvalue-ramanujan}, the corresponding matrix entry is
\begin{align*}
 (MXN^{\mathsf T})_{uv}
 &=\sum_{i=0}^{a}\sum_{j=0}^{b}
   \varphi(p^{a-u})c_{p^{a-i}}(p^u)\,x_{ij}\,
   \varphi(q^{b-v})c_{q^{b-j}}(q^v)\\
 &=\varphi(p^{a-u})\varphi(q^{b-v})\lambda_{u,v}.
\end{align*}
Hence each entry of $MXN^{\mathsf T}$ is the common eigenvalue on one layer
multiplied by its multiplicity.  Since every spectral parameter
$t\in\mathbb Z/n\mathbb Z$ determines a unique truncated valuation pair
$(u,v)$, the layers partition the spectral parameters.  Therefore
\[
\begin{aligned}
 \Energy\bigl(\ICG_n(\mathcal D)\bigr)
 &=\sum_{u=0}^{a}\sum_{v=0}^{b}
   \varphi(p^{a-u})\varphi(q^{b-v})\lvert\lambda_{u,v}\rvert\\
 &=\sum_{u=0}^{a}\sum_{v=0}^{b}
   \bigl\lvert(MXN^{\mathsf T})_{uv}\bigr\rvert\\
 &=\lVert MXN^{\mathsf T}\rVert_1,
\end{aligned}
\]
which proves \eqref{eq:exact-matrix-energy}.
\end{proof}

\section{The prime-power parity theorem}
\label{sec:parity}

Throughout this section, dependence on $x$ is suppressed from $T_k(x)$,
$\Delta_k(x)$, and related quantities when no confusion can arise.

\subsection{The parity vector, diagonal weights, and trace formula}

For $i\geq0$ define the alternating polynomial
\begin{equation}\label{eq:R-definition}
 R_i(x)=x^i-x^{i-1}+\cdots+(-1)^i
       =\frac{x^{i+1}+(-1)^i}{x+1}.
\end{equation}
For $i\geq1$, these polynomials satisfy
\begin{equation}\label{eq:R-recurrence}
 R_i(x)=xR_{i-1}(x)+(-1)^i=x^i-R_{i-1}(x).
\end{equation}
Let $s_k=(1,-1,\ldots,(-1)^k)^{\mathsf T}$.  The diagonal weights below
are chosen to match the action of $T_k(x)$ on the parity vector $s_k$.
Define $\Delta_k(x)=\operatorname{diag}(\delta_0,\ldots,\delta_k)$ by
\begin{equation}\label{eq:delta-definition}
 \delta_i=2x^{k-i-1}(x-1)R_i(x)\quad(0\leq i<k),
 \qquad
 \delta_k=2R_k(x)-x^k.
\end{equation}
The last entry can also be written as
\begin{equation}\label{eq:delta-last}
 \delta_k=\frac{x^k(x-1)+2(-1)^k}{x+1}.
\end{equation}
Since $R_i(x)>0$ for $x>1$, \eqref{eq:delta-definition} gives
$\delta_i>0$ for $0\leq i<k$.  In the parameter ranges of
Theorem~\ref{thm:parity}, \eqref{eq:delta-last} also gives $\delta_k>0$.
Hence $\Delta_k(x)$ is positive diagonal in those ranges.

\begin{lemma}\label{lem:parity-identity}
For every $k\geq1$ and $x>1$,
\begin{equation}\label{eq:parity-identity}
 T_k(x)s_k=(-1)^k\Delta_k(x)s_k.
\end{equation}
\end{lemma}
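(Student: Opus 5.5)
The plan is to verify \eqref{eq:parity-identity} coordinate by coordinate, using only the explicit entries of Proposition~\ref{prop:T-explicit}. Since $(s_k)_j=(-1)^j$ and $\Delta_k$ is diagonal, the claim for row $i$ is
\[
 \sum_{j=0}^k t_{ij}(-1)^j=(-1)^{k+i}\delta_i .
\]
Everything is polynomial in $x$, so it suffices to check this as a polynomial identity for each fixed $i$. I would treat the interior rows $i<k$ and the last row $i=k$ separately.

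\emph{Interior rows, $0\le i<k$.} By \eqref{eq:T-interior} and \eqref{eq:T-boundary}, the row splits into four parts:
\begin{itemize}
\item the zero entries $j\le k-2-i$, which contribute nothing;
\item the single entry $j=k-1-i$, with value $-x^{k-1}(x-1)$;
\item the entries $k-i\le j\le k-1$, with value $x^{2k-i-j-2}(x-1)^2$;
\item the boundary entry $j=k$, with value $x^{k-i-1}(x-1)$.
\end{itemize}
Every nonzero entry carries the factor $x^{k-i-1}(x-1)$, which is exactly the prefactor of $\delta_i$ in \eqref{eq:delta-definition}. After factoring it out and substituting $m=k-j$ in the middle block, the identity reduces to
\[
 (-1)^{i}x^{i}+\sum_{m=1}^{i}(-1)^{m}(x^{m}-x^{m-1})+1=2(-1)^iR_i(x).
\]
Here the common factor $(-1)^k$ has been cancelled. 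This is a telescoping sum. Splitting $\sum_m(-1)^m x^m$ and $-\sum_m(-1)^m x^{m-1}$, and reindexing the second, the left side becomes $2\sum_{m=0}^{i}(-1)^m x^m$. By \eqref{eq:R-definition} this equals $2(-1)^iR_i(x)$. The edge case $i=0$, where the middle block is empty, should be checked directly and fits the same formula.

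\emph{Last row, $i=k$.} By \eqref{eq:T-boundary}, we need
\[
 \sum_{i=0}^{k-1}x^{k-i-1}(x-1)(-1)^i+(-1)^k=\delta_k=2R_k(x)-x^k.
\]
Here the sign factor $(-1)^{2k}$ is $1$. I would expand $(x-1)x^{k-i-1}=x^{k-i}-x^{k-i-1}$ and telescope the alternating sum. Each power $x^{l}$ with $0\le l\le k-1$ then appears twice with sign $(-1)^{k-l}$, while $x^k$ appears once. The sum therefore equals $2\sum_{l=0}^{k}(-1)^{k-l}x^l-x^k$, which is $2R_k(x)-x^k$.

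The main obstacle is sign bookkeeping rather than any conceptual step. One has to check that the isolated entry at $i+j=k-1$, with its minus sign and parity $(-1)^{k-1-i}$, combines with the telescoping block to produce exactly $2R_i$, and not $2R_i$ shifted by a stray $\pm x^i$ term. I would verify this first on $k=1,2$ using the explicit matrices, and then carry out the general telescoping as above. No positivity assumption on $x$ is used, so the identity holds for every $x>1$ as stated.
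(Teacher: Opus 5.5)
Your proposal is correct and follows essentially the same route as the paper: a row-by-row check from the explicit entries of $T_k(x)$. You factor out $x^{k-i-1}(x-1)$ and identify the remaining alternating sum with $2R_i(x)$, and you treat the last row as $(x-1)R_{k-1}(x)+(-1)^k=2R_k(x)-x^k$. The only cosmetic difference is that the paper collapses $x^i+(x-1)R_{i-1}+(-1)^i$ using the recurrence for $R_i$, whereas you telescope the sum explicitly.
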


\begin{proof}
For $i<k$, the nonzero terms in row $i$ of
\eqref{eq:T-interior}--\eqref{eq:T-boundary} give
\begin{align*}
 (T_ks_k)_i={}&-(-1)^{k-1-i}x^{k-1}(x-1)\\
 &+\sum_{j=k-i}^{k-1}(-1)^j
   x^{2k-i-j-2}(x-1)^2
   +(-1)^kx^{k-i-1}(x-1).
\end{align*}
For $i\geq1$, after factoring $(-1)^{k+i}x^{k-i-1}(x-1)$, the
remaining expression is $x^i+(x-1)R_{i-1}(x)+(-1)^i=2R_i(x)$, where the
equality follows from \eqref{eq:R-recurrence}.  For $i=0$ the sum is empty,
and the remaining expression is $2=2R_0(x)$.  Hence
$(T_ks_k)_i=(-1)^{k+i}\delta_i$.  In the last row,
$(T_ks_k)_k=(x-1)R_{k-1}(x)+(-1)^k=2R_k(x)-x^k=\delta_k$.
These are exactly the coordinates of \eqref{eq:parity-identity}.
\end{proof}

The trace of $\Delta_k(x)$ recovers the polynomial $d_k(x)$ defined in
\eqref{eq:intro-dk}:
\begin{equation}\label{eq:dk-trace}
 d_k(x)=\tr\Delta_k(x).
\end{equation}
Summing \eqref{eq:delta-definition} and using \eqref{eq:R-definition} gives
\begin{align}
 d_k(x)
 &=\frac{(2k+1)x^k(x-1)+2(x-1)R_{k-1}(x)+2(-1)^k}{x+1}
 \notag\\
 &=(2k+1)x^k+4\sum_{j=0}^{k-1}(-1)^{k-j}(j+1)x^j.
 \label{eq:dk-formula}
\end{align}
The second line follows by polynomial division, or directly by comparing
successive coefficients.  In particular, $d_2(x)=5x^2-8x+4$ and
$d_3(x)=7x^3-12x^2+8x-4$.

\subsection{A parity-independent congruence reduction}

We next prove the part of Theorem~\ref{thm:parity} that does not depend on
the parity of $k$.  The aim is to transform $\Delta_k\pm T_k$ by congruence
into sparse forms suited to the opposite-endpoint block elimination used below.
Let $P_k$ be the reversal matrix of order $k+1$ and put
\begin{equation}\label{eq:scaling-W}
 W_k=\operatorname{diag}\bigl(1,x^0(x-1),x^1(x-1),\ldots,
                              x^{k-1}(x-1)\bigr).
\end{equation}
Since $x>1$, $W_k$ is invertible; the reversal matrix $P_k$ is a
permutation matrix and hence is invertible as well.  A direct substitution
of the entries of $T_k$ gives
\begin{equation}\label{eq:HT-congruence}
 W_k^{-1}P_kT_kP_kW_k^{-1}=H_k.
\end{equation}
Indeed, the reversal sends the $(i,j)$ entry to $t_{k-i,k-j}$.  Thus, for
$i,j\geq1$, the $(i,j)$ entry on the left-hand side of
\eqref{eq:HT-congruence} is
\[
 \frac{t_{k-i,k-j}}{x^{i+j-2}(x-1)^2}.
\]
If $i+j\leq k$, $i+j=k+1$, or $i+j\geq k+2$, the numerator is respectively
\[
 x^{i+j-2}(x-1)^2,\qquad
 -x^{k-1}(x-1),\qquad
 0,
\]
by \eqref{eq:T-interior}.  The cases $i=0$ or $j=0$ follow directly from the
boundary entries in \eqref{eq:T-boundary}.  Hence, for $0\leq i,j\leq k$,
\begin{equation}\label{eq:H-entries}
 (H_k)_{ij}=\begin{cases}
  1,&i+j\leq k,\\[1mm]
  -1/(x-1),&i+j=k+1,\\[1mm]
  0,&i+j\geq k+2.
 \end{cases}
\end{equation}
Similarly,
\begin{equation}\label{eq:BDelta-congruence}
 W_k^{-1}P_k\Delta_kP_kW_k^{-1}
 =B_k=\operatorname{diag}(b_0,\ldots,b_k),
\end{equation}
where, for $i\geq0$, we write
\begin{equation}\label{eq:F-definition}
 F_i(x)=2R_i(x)-x^i
       =\frac{x^i(x-1)+2(-1)^i}{x+1},
\end{equation}
we have
\begin{equation}\label{eq:b-entries}
 b_0=F_k,\qquad
 b_i=\frac{2R_{k-i}}{x^{i-1}(x-1)}\quad(1\leq i\leq k).
\end{equation}
For $i\geq1$, the latter formula follows directly from
\eqref{eq:delta-definition}:
\[
 b_i=\frac{\delta_{k-i}}{x^{2i-2}(x-1)^2}
     =\frac{2R_{k-i}}{x^{i-1}(x-1)},
\]
while $b_0=\delta_k=F_k$.

Let $C_k$ be the $(k+1)\times(k+1)$ upper bidiagonal first-difference matrix with
\begin{equation}\label{eq:C-definition}
 (C_k)_{ii}=1\quad(0\leq i\leq k),\qquad
 (C_k)_{i,i+1}=-1\quad(0\leq i<k),
\end{equation}
It is invertible.  Write $G_k=C_kH_kC_k^{\mathsf T}$.  For $i,j<k$,
first differencing in the row and column indices gives
\[
 (G_k)_{ij}=(H_k)_{ij}-(H_k)_{i+1,j}-(H_k)_{i,j+1}
             +(H_k)_{i+1,j+1}.
\]
Since the entries of $H_k$ depend only on $i+j$, the right-hand side vanishes
except when $i+j$ is one of $k-1,k,k+1$.  Substituting
\eqref{eq:H-entries} gives, up to symmetry,
\begin{equation}\label{eq:G-bands}
 (G_k)_{ij}=\begin{cases}
  -x/(x-1),&i,j<k,\ i+j=k-1,\\[1mm]
  (x+1)/(x-1),&i,j<k,\ i+j=k,\\[1mm]
  -1/(x-1),&i,j<k,\ i+j=k+1,
 \end{cases}
\end{equation}
For $k\geq2$, the last column satisfies
$(G_k)_{i,k}=(H_k)_{i,k}-(H_k)_{i+1,k}$ when $i<k$, and hence
\begin{equation}\label{eq:G-boundary}
 (G_k)_{0k}=\frac{x}{x-1},\qquad
 (G_k)_{1k}=-\frac1{x-1},\qquad (G_k)_{kk}=0.
\end{equation}
On the other hand, $C_kB_kC_k^{\mathsf T}$ is tridiagonal, with
\begin{equation}\label{eq:CBCT}
 (C_kB_kC_k^{\mathsf T})_{ii}=
 \begin{cases}b_i+b_{i+1},&i<k,\\ b_k,&i=k,\end{cases}
 \qquad
 (C_kB_kC_k^{\mathsf T})_{i,i+1}=-b_{i+1}.
\end{equation}
Combining \eqref{eq:HT-congruence} and
\eqref{eq:BDelta-congruence}, for $\eps\in\{1,-1\}$ we have
\begin{equation}\label{eq:combined-congruence}
W_k^{-1}P_k(\Delta_k+\eps T_k)P_kW_k^{-1}
=
B_k+\eps H_k.
\end{equation}
Since $W_k$ is diagonal and $P_k^{\mathsf T}=P_k$, setting
$S=P_kW_k^{-1}$ rewrites the left-hand side of
\eqref{eq:combined-congruence} as
$S^{\mathsf T}(\Delta_k+\eps T_k)S$.  As $S$ is invertible,
$\Delta_k+\eps T_k$ is congruent to $B_k+\eps H_k$.
Since $C_k$ is invertible, the latter is in turn congruent to
\begin{equation}\label{eq:common-congruent-matrix}
 \mathcal Q_k^\eps=C_k(B_k+\eps H_k)C_k^{\mathsf T}.
\end{equation}

\subsection{Endpoint pairing and tent polynomials}

For the block analysis below assume $k\geq2$; the case $k=1$ is treated
directly in the odd-endpoint argument below.  Set $m=\lfloor k/2\rfloor$.

Put $L=x-1$, $\alpha=x/L$, and $\beta=1/L$.  Permute the rows and columns
of \(\mathcal Q_k^\varepsilon\) simultaneously according to the opposite-endpoint
ordering
\[
 (0,k),(1,k-1),(2,k-2),\ldots.
\]
We refer to the pair $(h,k-h)$ in this ordering as pair $h$. Within each pair we keep the order $(h,k-h)$, and let $D_h^\eps$ denote the $2\times2$ principal block of $\mathcal Q_k^\varepsilon$ on rows and columns $h$ and $k-h$. If $k=2m+1$, this produces $m+1$ pairs; if $k=2m$, it produces $m$ pairs
and the singleton $m$. We call the pairs indexed by $0\leq h<m$ the nonterminal pairs. From \eqref{eq:G-bands}--\eqref{eq:CBCT}, the first
pair block and the subsequent nonterminal pair blocks are
\begin{equation}\label{eq:D0-block}
 D_0^\eps=\begin{pmatrix}b_0+b_1&\eps\alpha\\
                           \eps\alpha&b_k\end{pmatrix},
\end{equation}
and, for $1\leq h<m$,
\begin{equation}\label{eq:Dh-block}
 D_h^\eps=
 \begin{pmatrix}
  b_h+b_{h+1}&\eps(\alpha+\beta)\\
  \eps(\alpha+\beta)&b_{k-h}+b_{k-h+1}
 \end{pmatrix}.
\end{equation}
Whenever the pair $h+1$ exists, let $E_h^\eps$ denote the $2\times2$ block of $\mathcal Q_k^\varepsilon$ with rows indexed by $(h,k-h)$ and columns indexed by $(h+1,k-h-1)$. By \eqref{eq:G-bands}--\eqref{eq:CBCT},
\begin{equation}\label{eq:Eh-block}
 E_h^\eps=
 \begin{pmatrix}
  -b_{h+1}&-\eps\alpha\\
  -\eps\beta&-b_{k-h}
 \end{pmatrix}.
\end{equation}
All other nonterminal block entries vanish.  Thus the nonterminal part is
block tridiagonal, with diagonal blocks $D_h^\eps$ and successive couplings
$E_h^\eps$.  The final odd pair and the final even singleton are recorded
explicitly in Appendix~\ref{app:endpoints}.

The following polynomials will encode the determinants arising in the successive Schur complements below. For $1\leq h\leq\lceil k/2\rceil$ define the truncated tent polynomial
\begin{equation}\label{eq:tent-definition}
 A_{k,h}(x)=x^k+4\sum_{\ell=0}^{k-1}(-1)^{\ell+1}
 \min\{\ell+1,k-\ell,h\}x^{k-1-\ell}.
\end{equation}
Increasing the truncation height from \(h\) to \(h+1\)
changes the minimum term by \(1\) precisely when
\(
h\leq \ell\leq k-h-1,
\)
and leaves it unchanged otherwise. Hence
\begin{equation}
A_{k,h+1}(x)-A_{k,h}(x)
=
4\sum_{\ell=h}^{k-h-1}
(-1)^{\ell+1}x^{k-1-\ell}.
\label{eq:tent-difference-sum}
\end{equation}
Factoring out \(4(-1)^{h+1}x^h\), the remaining alternating sum is
\(R_{k-1-2h}(x)\). Therefore,
\begin{equation}
A_{k,h+1}(x)-A_{k,h}(x)
=
4(-1)^{h+1}x^hR_{k-1-2h}(x)
\quad
\left(1\leq h<\left\lceil\frac{k}{2}\right\rceil\right).
\label{eq:tent-recurrence}
\end{equation}

\subsection{Positivity of the tent polynomials}

\begin{lemma}\label{lem:tent-positive}
The following statements hold.
\begin{enumerate}
 \item If $k=2m+1$ and $x\geq5$, then
 $A_{k,h}(x)>0$ for $1\leq h\leq m+1$.
 \item If $k=2m$ and $x\geq3$, then
 $A_{k,h}(x)>0$ for $1\leq h\leq m$.
\end{enumerate}
\end{lemma}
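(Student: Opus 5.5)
The plan is to start from the base case $h=1$, where $A_{k,1}$ has a closed form, and then use the recurrence \eqref{eq:tent-recurrence} to show that raising the truncation height never lowers $A_{k,h}$ below $A_{k,1}$. Under this approach the only place the parity of $k$ and the thresholds $x\geq3$, $x\geq5$ matter is the base case.

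\emph{Base case.} For $h=1$ every minimum in \eqref{eq:tent-definition} equals $1$, since $\ell+1\geq1$ and $k-\ell\geq1$. Hence
\[
A_{k,1}(x)=x^k+4\sum_{\ell=0}^{k-1}(-1)^{\ell+1}x^{k-1-\ell}=x^k-4R_{k-1}(x).
\]
Using \eqref{eq:R-definition}, this becomes
\[
A_{k,1}(x)=\frac{x^k(x-3)+4(-1)^k}{x+1}.
\]
\begin{itemize}
\item For even $k$ and $x\geq3$, the numerator is at least $4>0$.
\item For odd $k$ and $x\geq5$, the numerator is at least $2x^k-4>0$.
\end{itemize}
At $x=3$ with $k$ odd the value is $-1$, which explains why the odd case needs $x\geq5$.

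\emph{Telescoping.} Telescoping \eqref{eq:tent-recurrence} gives, for $h$ in the stated range,
\[
A_{k,h}=A_{k,1}+4\sum_{g=1}^{h-1}(-1)^{g+1}x^gR_{k-1-2g}(x).
\]
The summands alternate in sign, starting with a positive one at $g=1$. I would group them in consecutive pairs $(g,g+1)$ with $g$ odd. Each such pair contributes
\[
4x^g\bigl(R_j(x)-xR_{j-2}(x)\bigr),\qquad j=k-1-2g.
\]
A direct computation from \eqref{eq:R-definition} gives
\[
R_j-xR_{j-2}=\frac{(x-1)\bigl(x^j-(-1)^j\bigr)}{x+1},
\]
which is nonnegative for $j\geq0$ and $x>1$. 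If $h-1$ is odd, one term is left unpaired; it has odd index and is therefore positive. Either way the sum is nonnegative, so $A_{k,h}\geq A_{k,1}>0$.

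\emph{Main obstacle: index bookkeeping.} The pairing requires the indices $k-1-2g$ and $k-3-2g$ to stay nonnegative, so that the $R$'s are the genuine alternating polynomials. The recurrence \eqref{eq:tent-recurrence} is asserted only for $h<\lceil k/2\rceil$. This yields exactly $h\leq m+1$ when $k=2m+1$ and $h\leq m$ when $k=2m$, matching the ranges in the statement. In the last pair I would check that $j=k-1-2g\geq1$, and that even in the extreme case $j-2\geq0$ (or that the pair simply does not occur).
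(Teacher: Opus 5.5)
Your proof is correct. For even $k$ it is exactly the paper's argument: the same closed form for $A_{2m,1}$ and the same paired increments (your pair $(g,g+1)$ is the paper's $\Gamma_g+\Gamma_{g+1}$, and your $j=k-1-2g$ is its $\nu$).

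For odd $k$ you take a different route. The paper does not use the recurrence there. It pairs consecutive coefficients directly in \eqref{eq:tent-definition}, writing $A_{k,h}=x^{k-1}(x-4)+4\sum_{j=1}^{m}x^{k-2j-1}(xu_{2j-1}-u_{2j})$ with $u_\ell=\min\{\ell+1,k-\ell,h\}$. It then uses $u_{2j}\le u_{2j-1}+1\le 2u_{2j-1}$ to make every term positive for $x\ge5$. That argument is termwise and needs no closed forms, but it does need the leading term $x^{k-1}(x-4)$ to be nonnegative.

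Your uniform treatment instead shows $A_{k,h}\ge A_{k,1}$ for every admissible $h$ in both parities. In the odd case the pair sums are $4x^g(x-1)(x^j-1)/(x+1)$ with $j\ge2$ even. So all dependence on parity and on the threshold sits in the sign of $A_{k,1}=\bigl(x^k(x-3)+4(-1)^k\bigr)/(x+1)$. This fits Remark~\ref{rem:p2q3-relation}, where the failure at $q=3$ is precisely $A_{k,1}(3)=-1$. For odd $k$ it also gives positivity whenever $x^k(x-3)>4$ (e.g.\ for all $x>4$).

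The bookkeeping you flagged does close. A paired index satisfies $g+1\le h-1\le\lceil k/2\rceil-1$, so $j-2=k-3-2g\ge k+1-2\lceil k/2\rceil\ge0$.
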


\begin{proof}
For odd $k=2m+1$, put
$u_\ell=\min\{\ell+1,k-\ell,h\}$.  Pairing consecutive terms in
\eqref{eq:tent-definition} gives
\begin{equation}\label{eq:odd-tent-decomposition}
 A_{k,h}=x^{k-1}(x-4)
 +4\sum_{j=1}^{m}x^{k-2j-1}\bigl(xu_{2j-1}-u_{2j}\bigr).
\end{equation}
Here $u_{2j}\leq u_{2j-1}+1\leq2u_{2j-1}$.  Every summand in
\eqref{eq:odd-tent-decomposition}, as well as the initial term, is therefore
positive for $x\geq5$.

Now let $k=2m$. When $h=1$, \eqref{eq:tent-definition} and
\eqref{eq:R-definition} give
\begin{equation}\label{eq:even-tent-height-one}
\begin{aligned}
 A_{2m,1}(x)
 &=x^{2m}+4\sum_{\ell=0}^{2m-1}(-1)^{\ell+1}x^{2m-1-\ell}\\
 &=x^{2m}-4R_{2m-1}(x)\\
 &=\frac{x^{2m}(x-3)+4}{x+1}>0\qquad(x\geq3).
\end{aligned}
\end{equation}
Write $\Gamma_h=A_{k,h+1}-A_{k,h}$. By \eqref{eq:tent-recurrence}, an odd-indexed increment is
$\Gamma_{2j-1}=4x^{2j-1}R_{k-4j+1}>0$.
When the following even-indexed increment also exists, put
$\nu=k-4j+1$, which is odd.  By \eqref{eq:R-definition},
$R_\nu=x^\nu-x^{\nu-1}+R_{\nu-2}$ and
$R_{\nu-2}=(x^{\nu-1}-1)/(x+1)$. Hence
\begin{align*}
 \Gamma_{2j-1}+\Gamma_{2j}
 &=4x^{2j-1}\bigl(R_\nu-xR_{\nu-2}\bigr)\\
 &=4x^{2j-1}(x-1)\bigl(x^{\nu-1}-R_{\nu-2}\bigr)\\
 &=4x^{2j-1}(x-1)\frac{x^{\nu}+1}{x+1}>0.
\end{align*}
Thus $A_{k,h}-A_{k,1}$ is a sum of positive adjacent pairs, possibly
followed by one positive odd-indexed increment.  This proves the even claim.
\end{proof}

\subsection{The nonterminal Schur recurrence}

In the parameter ranges of Theorem~\ref{thm:parity},
\eqref{eq:R-definition} and \eqref{eq:F-definition} give
\[
 R_i(x)>0,\qquad F_i(x)>0,
\]
and Lemma~\ref{lem:tent-positive} gives
\[
 A_{k,h+1}(x)>0\qquad(0\leq h<m).
\]
Define, for every nonterminal pair $0\leq h<m$,
\begin{equation}\label{eq:cga-definition}
 c_h=\frac{2R_h}{x^{k-h-1}L},\qquad
 g_h=\frac{F_{h+1}A_{k,h+1}}{x^{k-1}L^2F_h},\qquad
 a_h=\frac{g_h+\alpha^2}{c_h}.
\end{equation}
These quantities satisfy $c_h>0$ and $g_h>0$.

\begin{lemma}
\label{lem:universal-schur}
After successive block elimination from the left, the Schur complement at
pair $h$, for $0\leq h<m$, is
\begin{equation}\label{eq:universal-Schur}
 S_h^\eps=
 \begin{pmatrix}a_h&\eps\alpha\\ \eps\alpha&c_h\end{pmatrix},
 \qquad \det S_h^\eps=g_h.
\end{equation}
In particular, since $c_h>0$ and $g_h>0$, each $S_h^\eps$ is positive
definite and hence invertible.
\end{lemma}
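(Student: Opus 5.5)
Induction on $h$ along the opposite-endpoint ordering: the nonterminal part of $\mathcal Q_k^\eps$ is block tridiagonal with diagonal blocks $D_h^\eps$ and couplings $E_h^\eps$ (from \eqref{eq:D0-block}--\eqref{eq:Eh-block}), so successive elimination gives
\[
 S_0^\eps=D_0^\eps,\qquad
 S_{h+1}^\eps=D_{h+1}^\eps-(E_h^\eps)^{\mathsf T}(S_h^\eps)^{-1}E_h^\eps\quad(0\le h<m-1).
\]
The claim is that $S_h^\eps$ has the stated form, with off-diagonal entry $\eps\alpha$, second diagonal entry $c_h$, and determinant $g_h$. The first diagonal entry is then forced, $a_h=(g_h+\alpha^2)/c_h$, because $\det S_h^\eps=a_hc_h-\alpha^2$.

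\textbf{Base case $h=0$.} By \eqref{eq:D0-block} the off-diagonal entry is $\eps\alpha$. The entry $b_k=2R_0/(x^{k-1}L)=c_0$ by \eqref{eq:b-entries}. It then remains to check $(b_0+b_1)b_k-\alpha^2=g_0=F_1A_{k,1}/(x^{k-1}L^2F_0)$. Here $F_0=1$ and $F_1=1$, so this reduces to the polynomial identity
\[
 2(F_k+2R_{k-1}/L)-x^2x^{k-1}=A_{k,1},
\]
after multiplying through by $x^{k-1}L^2$. I would verify it using $A_{k,1}=x^k-4R_{k-1}$, the one-line computation from \eqref{eq:tent-definition}, together with $F_k=2R_k-x^k$ and \eqref{eq:R-recurrence}.

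\textbf{Inductive step.} Write $S_h^\eps=\begin{pmatrix}a&\eps\alpha\\ \eps\alpha&c\end{pmatrix}$, so $(S_h^\eps)^{-1}=g_h^{-1}\begin{pmatrix}c&-\eps\alpha\\ -\eps\alpha&a\end{pmatrix}$. Then compute $(E_h^\eps)^{\mathsf T}(S_h^\eps)^{-1}E_h^\eps$ with $E_h^\eps$ from \eqref{eq:Eh-block}. Since $\eps^2=1$, every $\eps$ appears either squared or as one overall factor on the off-diagonal entry, so the computation is uniform in $\eps$. This is why both signs share the same recurrence.

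Three things must then be checked.
\begin{enumerate}
 \item The $(2,2)$ entry of $S_{h+1}^\eps$ equals $c_{h+1}$. Note that $b_{k-h}+b_{k-h+1}$ minus the correction must collapse to $2R_{h+1}/(x^{k-h-2}L)$; I expect the $a_h$-dependence to cancel here via $a_hc_h=g_h+\alpha^2$.
 \item The off-diagonal entry equals $\eps\alpha$. This should come from $\eps(\alpha+\beta)$ minus a correction equal to $\eps\beta$.
 \item The determinant equals $g_{h+1}$.
\end{enumerate}
For the determinant, it is cleanest to use the Schur determinant identity on the $4\times4$ block: $\det S_{h+1}^\eps\cdot g_h=\det\begin{pmatrix}S_h&E_h\\ E_h^{\mathsf T}&D_{h+1}\end{pmatrix}$. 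This turns the claim into a ratio identity. The $F$-factors are telescoping, $g_{h+1}/g_h$ contains $F_{h+2}F_h/F_{h+1}^2$, and what remains is an identity linking $A_{k,h+2}$, $A_{k,h+1}$ and the $b$'s.

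\textbf{Main obstacle.} The hardest step is this determinant identity: showing the new determinant is $F_{h+2}A_{k,h+2}/(x^{k-1}L^2F_{h+1})$. I would first reduce it, via the recurrence \eqref{eq:tent-recurrence}, to an identity involving $A_{k,h+1}$ and the increment $4(-1)^{h+1}x^{h+1}R_{k-3-2h}$. I would then clear the denominators $x^{\cdot}L^2(x+1)^2$ and verify the resulting polynomial identity in $x$, treating $x^{h}$ and $x^{k}$ as independent symbols. This works because every $R_i$ and $F_i$ is a combination of $x^i$ and $(-1)^i$ over $x+1$. If a direct verification is messy, I would instead check it with $h$ and $k$ generic, separately in the four parity classes of $(h,k)$.

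Finally, positivity: $c_h>0$ and $g_h>0$ follow from $R_i,F_i>0$ and Lemma~\ref{lem:tent-positive}. Then $a_h=(g_h+\alpha^2)/c_h>0$, so each $S_h^\eps$ is positive definite, as the lemma asserts.
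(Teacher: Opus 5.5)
Your skeleton is the paper's: induction along the paired ordering, $S_0^\eps=D_0^\eps$, and $a_hc_h=g_h+\alpha^2$ making $(S_h^\eps)^{-1}$ send the second column $(-\eps\alpha,-c_h)^{\mathsf T}$ of $E_h^\eps$ to $(0,-1)^{\mathsf T}$, which gives the off-diagonal entry $\eps\alpha$ and lower-right entry $c_{h+1}$. There is a small index slip here: the lower-right entry of $D_{h+1}^\eps$ is $b_{k-h-1}+b_{k-h}$, and subtracting $c_h=b_{k-h}$ leaves $b_{k-h-1}=c_{h+1}$.

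Your base case, however, fails as written. $F_1=2R_1-x=x-2$, not $1$. Clearing $x^{k-1}L^2$ from $(b_0+b_1)b_k-\alpha^2$ gives $2L\bigl(F_k+2R_{k-1}/L\bigr)-x^{k+1}$, not $2\bigl(F_k+2R_{k-1}/L\bigr)-x^{k+1}$. So the identity you propose to verify is false: at $k=2$, $x=5$ its left side is $-87$ while $A_{2,1}(5)=9$. The correct identity is $2LF_k+4R_{k-1}-x^{k+1}=(x-2)(x^k-4R_{k-1})=F_1A_{k,1}$, which follows from $F_k=x^k-2R_{k-1}$.

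The substantive gap is the determinant step. Write $c=c_h$, $u=b_{h+1}$, $g=g_h$. Then $\det S_{h+1}^\eps=c_{h+1}\bigl(u+b_{h+2}-\beta^2/c-(cu-\alpha\beta)^2/(cg)\bigr)-\alpha^2$. Since $g_h$ carries the factor $A_{k,h+1}$, that polynomial sits in a denominator. By contrast, $g_{h+1}$ is linear in $A_{k,h+2}=A_{k,h+1}+4(-1)^{h}x^{h+1}R_{k-3-2h}$ (note the sign is $(-1)^h$, not $(-1)^{h+1}$). So after your reduction via the tent recurrence, the required equation is not an identity in $A_{k,h+1}$ treated as an unknown. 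It holds only for the actual value of $A_{k,h+1}$, and your plan never supplies that value in closed form; you only note that $R_i$ and $F_i$ are explicit.

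The idea the paper's proof turns on is the product formula $A_{k,h}=x^k-4R_{h-1}R_{k-h}$, obtained by comparing the coefficients of $R_{h-1}R_{k-h}$ with the tent definition. It gives $c_hb_{h+1}-\alpha\beta=-A_{k,h+1}/(x^{k-1}L^2)$, hence $(c_hb_{h+1}-\alpha\beta)/g_h=-F_h/F_{h+1}$, which removes $A_{k,h+1}$ from the denominator. The cleared numerator then collapses to $F_{h+2}\bigl(x^k-4R_{h+1}R_{k-h-2}\bigr)=F_{h+2}A_{k,h+2}$, using $F_j+F_{j+1}=x^jL$, $R_j+R_{j-1}=x^j$ and $xF_j+F_{j+1}=2LR_j$. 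You could instead derive the closed form by telescoping the recurrence from $A_{k,1}=x^k-4R_{k-1}$. Either way, some explicit formula for $A_{k,h+1}$ must be established before your symbolic verification in $x^h,x^k$ can be set up. Your final positivity argument is fine.
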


\begin{proof}
We prove by induction on $h$ that the successive Schur complements have the form stated in \eqref{eq:universal-Schur}. For $h=0$, \eqref{eq:D0-block} has lower-right entry
$b_k=2/(x^{k-1}L)=c_0$ and off-diagonal entry $\eps\alpha$.
Moreover, \eqref{eq:b-entries} gives
\[
 \det D_0^\eps
 =(b_0+b_1)b_k-\alpha^2
 =\frac{(x-2)\bigl(x^k-4R_{k-1}\bigr)}{x^{k-1}L^2}.
\]
Here $F_0=1$, $F_1=x-2$, and
$A_{k,1}=x^k-4R_{k-1}$ by \eqref{eq:F-definition} and
\eqref{eq:tent-definition}, so the last expression is $g_0$ by
\eqref{eq:cga-definition}.  Its upper-left entry is therefore $a_0$.

Suppose \eqref{eq:universal-Schur} holds at $h-1$, where $h\geq1$.  Set
$c=c_{h-1}$, $g=g_{h-1}$, $a=a_{h-1}$, and $u=b_h$.  Since
$g=g_{h-1}>0$, the inductive block $S_{h-1}^\eps$ is invertible.  From
\eqref{eq:b-entries}, $b_{k-h+1}=c$, and hence \eqref{eq:Eh-block} gives
\begin{equation}\label{eq:generic-inverse-coupling}
 (S_{h-1}^\eps)^{-1}=\frac1g
 \begin{pmatrix}c&-\eps\alpha\\-\eps\alpha&a\end{pmatrix},
 \qquad
 E_{h-1}^\eps=
 \begin{pmatrix}-u&-\eps\alpha\\-\eps\beta&-c\end{pmatrix}.
\end{equation}

At the $h$-th elimination step, the relevant $2\times2$ block matrix is
\[
 \begin{pmatrix}
  S_{h-1}^\eps&E_{h-1}^\eps\\
  (E_{h-1}^\eps)^{\mathsf T}&D_h^\eps
 \end{pmatrix}.
\]
Eliminating the preceding block therefore gives
\begin{equation}\label{eq:generic-Schur-step}
 S_h^\eps
 =D_h^\eps-(E_{h-1}^\eps)^{\mathsf T}
  (S_{h-1}^\eps)^{-1}E_{h-1}^\eps.
\end{equation}

The second column of $E_{h-1}^\eps$ has the particularly simple image
\begin{equation}\label{eq:generic-column-cancellation}
 (S_{h-1}^\eps)^{-1}
 \begin{pmatrix}-\eps\alpha\\-c\end{pmatrix}
 =\begin{pmatrix}0\\-1\end{pmatrix},
\end{equation}
because $ac=g+\alpha^2$.  Consequently, the correction term in
\eqref{eq:generic-Schur-step} satisfies
\[
 \bigl[(E_{h-1}^\eps)^{\mathsf T}(S_{h-1}^\eps)^{-1}
 E_{h-1}^\eps\bigr]_{22}=c,
 \qquad
 \bigl[(E_{h-1}^\eps)^{\mathsf T}(S_{h-1}^\eps)^{-1}
 E_{h-1}^\eps\bigr]_{12}=\eps\beta.
\]
Subtracting these entries from \eqref{eq:Dh-block} yields
\[
 (S_h^\eps)_{22}=b_{k-h}=c_h,
 \qquad
 (S_h^\eps)_{12}=\eps(\alpha+\beta)-\eps\beta=\eps\alpha.
\]

For the determinant, the first column of $E_{h-1}^\eps$ gives the
upper-left correction $\frac{cu^2-2u\alpha\beta+a\beta^2}{g}$.  Since
$ac=g+\alpha^2$, this can be rewritten as
$\frac{\beta^2}{c}+\frac{(cu-\alpha\beta)^2}{cg}$.  Thus the upper-left
entry is $u+b_{h+1}$ minus this correction, and therefore
\begin{equation}\label{eq:Schur-determinant-formula}
 \det S_h^\eps
 =c_h\left(
 u+b_{h+1}-\frac{\beta^2}{c}
 -\frac{(cu-\alpha\beta)^2}{cg}
 \right)-\alpha^2.
\end{equation}

Multiplying the alternating sums in \eqref{eq:R-definition}, the coefficient of
$x^{k-1-\ell}$ in $R_{h-1}R_{k-h}$ is
$(-1)^\ell\min\{\ell+1,k-\ell,h\}$ for $0\leq\ell\leq k-1$.
Comparing with \eqref{eq:tent-definition} therefore gives
\begin{equation}\label{eq:tent-product-identity}
 A_{k,h}=x^k-4R_{h-1}R_{k-h}
 \qquad\left(1\leq h\leq\left\lceil\frac{k}{2}\right\rceil\right).
\end{equation}
Using \eqref{eq:b-entries}, \eqref{eq:cga-definition}, and
\eqref{eq:tent-product-identity}, together with the local definitions of
$c$, $g$, and $u$, we obtain
\begin{equation}\label{eq:cu-relations}
 cu-\alpha\beta
 =-\frac{A_{k,h}}{x^{k-1}L^2},
 \qquad
 \frac{cu-\alpha\beta}{g}
 =-\frac{F_{h-1}}{F_h}.
\end{equation}
By \eqref{eq:F-definition}, $F_j=2R_j-x^j$; together with
\eqref{eq:R-recurrence}, this gives
\begin{equation}\label{eq:F-relations}
 F_j=R_j-R_{j-1}\quad(j\geq1),
 \qquad
 F_{j+1}=xF_j+2(-1)^{j+1}\quad(j\geq0).
\end{equation}
Substituting \eqref{eq:cu-relations} into
\eqref{eq:Schur-determinant-formula} and clearing the common denominator
$x^{k-1}L^2F_h$, the identities \eqref{eq:b-entries},
\eqref{eq:cga-definition}, \eqref{eq:R-recurrence}, and
\eqref{eq:F-relations} reduce the numerator to
\[
x^{k-1}L^2F_h\det S_h^\eps
=
F_{h+1}\bigl(x^k-4R_hR_{k-h-1}\bigr)
=
F_{h+1}A_{k,h+1},
\]
where the last equality is \eqref{eq:tent-product-identity} with
$h+1$ in place of $h$. Hence
\begin{equation}
\det S_h^\eps
=
\frac{F_{h+1}A_{k,h+1}}
{x^{k-1}L^2F_h}
=
g_h.
\label{eq:Schur-determinant-closed}
\end{equation}
The upper-left entry is now uniquely determined by the lower-right entry,
the off-diagonal entry, and the determinant, and equals $a_h$.
This completes the induction.
\end{proof}

\begin{remark}\label{rem:even-threshold}
Formula \eqref{eq:even-tent-height-one} shows that, for fixed $1<x<3$,
$A_{2m,1}(x)$ is negative for all sufficiently large $m$.  Thus $x=3$ is
the natural threshold for the uniform tent-positivity argument used here.  We
do not infer from this alone that the first block determinant is negative: that
determinant also contains $F_1=x-2$, which changes sign.
\end{remark}

\subsection{Odd and even endpoints}

We use the endpoint identities collected in Appendix~\ref{app:endpoints}.

We now complete the proof of Theorem~\ref{thm:parity}.  In the odd case
$k=2m+1$ we have $x\geq5$, whereas in the even case $k=2m$ we have
$x\geq3$.  By Lemma~\ref{lem:universal-schur}, every nonterminal Schur block is
positive definite and hence invertible.

Suppose first that $k=2m+1$ is odd.  For $m=0$,
\begin{equation}\label{eq:k1-endpoint-matrices}
 T_1=
 \begin{pmatrix}
  -(x-1)&x-1\\
  x-1&1
 \end{pmatrix},
 \qquad
 \Delta_1=\operatorname{diag}(2(x-1),x-2).
\end{equation}
Thus
\begin{equation}\label{eq:k1-plus-block}
 \Delta_1+T_1
 =(x-1)
 \begin{pmatrix}
  1&1\\
  1&1
 \end{pmatrix}.
\end{equation}
This matrix is positive semidefinite of rank one.  By
\eqref{eq:parity-identity} with $k=1$, $T_1s_1=-\Delta_1s_1$, and hence
$(\Delta_1+T_1)s_1=0$.  Since its nullity is one,
$\ker(\Delta_1+T_1)=\Span\{s_1\}$.  Moreover,
$\Delta_1-T_1$ has leading diagonal entry $3(x-1)>0$ and determinant
$2(x-1)(x-4)>0$, and is therefore positive definite.

Now let $m\geq1$.  All coordinates are paired.  Since $k$ is odd,
\eqref{eq:parity-identity} gives $(\Delta_k+T_k)s_k=0$, so
\eqref{eq:combined-congruence}--\eqref{eq:common-congruent-matrix} show
that $\mathcal Q_k^+$ is singular.  Since $g_h>0$ for all nonterminal
blocks, the determinant factorisation \eqref{eq:appendix-odd-det-factorisation} forces
$\det S_m^+=0$.  Together with \eqref{eq:appendix-odd-plus-entries}, this
determines the final plus block as
\begin{equation}\label{eq:odd-final-plus}
 S_m^+
 =
 \frac1{x^m(x-1)F_m}
 \begin{pmatrix}
  F_{m+1}^2&F_{m+1}F_m\\
  F_{m+1}F_m&F_m^2
 \end{pmatrix}.
\end{equation}
The scalar factor is positive, and
\begin{equation}\label{eq:odd-final-plus-factorisation}
 \begin{pmatrix}
  F_{m+1}^2&F_{m+1}F_m\\
  F_{m+1}F_m&F_m^2
 \end{pmatrix}
 =
 \begin{pmatrix}
  F_{m+1}\\
  F_m
 \end{pmatrix}
 \begin{pmatrix}
  F_{m+1}&F_m
 \end{pmatrix}.
\end{equation}
Hence $S_m^+$ is positive semidefinite.  Since $F_m>0$, it is nonzero
and therefore has rank one.

For the minus sign, put
\begin{equation}\label{eq:odd-final-parameters}
 C_m^*=\frac{x^m+2R_m}{x^m(x-1)},\qquad
 B_m^*=-\frac{x^{m+1}+2R_m}{x^m(x-1)},\qquad
 G_m^*=\frac{2A_{k,m+1}}{x^m(x-1)F_m}.
\end{equation}
By \eqref{eq:appendix-odd-minus-entries} and
\eqref{eq:appendix-odd-minus-det-final}, the final Schur block is
\begin{equation}\label{eq:odd-final-minus}
 S_m^-=
 \begin{pmatrix}
  (G_m^*+(B_m^*)^2)/C_m^*&B_m^*\\
  B_m^*&C_m^*
 \end{pmatrix},
 \qquad
 \det S_m^-=G_m^*.
\end{equation}
Since $R_m>0$, we have $C_m^*>0$.  Moreover,
Lemma~\ref{lem:tent-positive} gives $A_{k,m+1}>0$ in the odd case,
and $F_m>0$, so $G_m^*>0$.  Hence
\eqref{eq:odd-final-minus} gives $S_m^-\succ0$.

We now transfer these endpoint conclusions back to the original matrices.
By \eqref{eq:combined-congruence} and
\eqref{eq:common-congruent-matrix},
$\Delta_k+\eps T_k$ is congruent to $\mathcal Q_k^\eps$.
The simultaneous row-and-column permutation used in the endpoint pairing
is also a congruence transformation.  Moreover, whenever $S$ is invertible,
\begin{equation}\label{eq:schur-congruence-step}
 \begin{pmatrix}
  S&E\\
  E^{\mathsf T}&D
 \end{pmatrix}
 \quad\text{is congruent to}\quad
 S\oplus\bigl(D-E^{\mathsf T}S^{-1}E\bigr).
\end{equation}
Since the nonterminal blocks $S_0^\eps,\ldots,S_{m-1}^\eps$ are positive
definite, each Schur elimination in \eqref{eq:schur-congruence-step} is
legitimate.  Consequently, after the successive eliminations in the odd
case,
\begin{equation}\label{eq:odd-schur-direct-sum}
 \mathcal Q_k^\eps
 \quad\text{is congruent to}\quad
 S_0^\eps\oplus S_1^\eps\oplus\cdots\oplus S_m^\eps.
\end{equation}
Since congruence preserves inertia, combining
\eqref{eq:odd-schur-direct-sum} with the positivity of the nonterminal
blocks established above shows that $\Delta_k+T_k$ is positive semidefinite.  The nonterminal blocks contribute no zero eigenvalues, while the $2\times2$ block $S_m^+$ is nonzero and has rank one; hence the nullity is exactly one.  The positive definiteness of $S_m^-$ similarly gives
$\Delta_k-T_k\succ0$.  Finally, since $k$ is odd,
\eqref{eq:parity-identity} gives $(\Delta_k+T_k)s_k=0$.  The nullity is
one, and therefore $\ker(\Delta_k+T_k)=\Span\{s_k\}$.

Now let $k=2m$ be even.  Abbreviate
\begin{equation}\label{eq:even-end-abbreviations}
 c=c_{m-1},\qquad
 g=g_{m-1},\qquad
 B=b_m,\qquad
 \alpha=\frac{x}{x-1},\qquad
 \beta=\frac1{x-1}.
\end{equation}
By \eqref{eq:universal-Schur} and
\eqref{eq:appendix-even-endpoint}, with $b_{m+1}=c$ from
\eqref{eq:b-entries}, the final pair block, the $2\times1$ off-diagonal
block between the final pair and the central singleton, and the $1\times1$
central diagonal block are respectively
\begin{equation}\label{eq:even-final-data}
 S_{m-1}^\eps=
 \begin{pmatrix}
  (g+\alpha^2)/c&\eps\alpha\\
  \eps\alpha&c
 \end{pmatrix},
 \qquad
 v_\eps=
 \begin{pmatrix}
  -B-\eps\alpha\\
  -c-\eps\beta
 \end{pmatrix},
 \qquad
 \eta_\eps=B+c+\eps(\alpha+\beta).
\end{equation}
Since $S_{m-1}^\eps$ is a nonterminal positive-definite block, it is
invertible.  Its final scalar Schur complement is
\begin{equation}\label{eq:even-final-schur-scalar}
 \sigma_\eps
 =
 \eta_\eps
 -
 v_\eps^{\mathsf T}(S_{m-1}^\eps)^{-1}v_\eps.
\end{equation}
Using \eqref{eq:appendix-even-quadratic} in
\eqref{eq:even-final-schur-scalar} gives
\begin{equation}\label{eq:even-sigma}
 \sigma_\eps
 =B-\frac{\beta^2}{c}
 -\frac{(cB-\alpha\beta)^2}{cg}
 +\eps,
\end{equation}
where we used $\alpha-\beta=1$.

By the same congruence and Schur elimination argument as above,
\begin{equation}\label{eq:even-schur-direct-sum}
 \Delta_k+\eps T_k
 \quad\text{is congruent to}\quad
 S_0^\eps\oplus\cdots\oplus
 S_{m-1}^\eps\oplus[\sigma_\eps].
\end{equation}
Since $k$ is even, \eqref{eq:parity-identity} gives
$T_ks_k=\Delta_ks_k$, and hence $(\Delta_k-T_k)s_k=0$.
Thus $\Delta_k-T_k$ is singular.  For $\eps=-1$, the blocks
$S_0^-,\ldots,S_{m-1}^-$ are positive definite and hence nonsingular,
so \eqref{eq:even-schur-direct-sum} forces $\sigma_-=0$.
Since \eqref{eq:even-sigma} gives $\sigma_+-\sigma_-=2$, we obtain
\begin{equation}\label{eq:even-sigma-values}
 \sigma_-=0,\qquad
 \sigma_+=2.
\end{equation}
Since all preceding Schur blocks are positive definite,
\eqref{eq:even-schur-direct-sum} shows that $\Delta_k-T_k$ is positive
semidefinite with nullity one, whereas $\Delta_k+T_k$ is positive
definite.  Finally, \eqref{eq:parity-identity} and the one-dimensional
kernel give $\ker(\Delta_k-T_k)=\Span\{s_k\}$.

This proves both halves of Theorem~\ref{thm:parity}.

\section{Energy maximisation for opposite-parity exponents}
\label{sec:energy}

Let
\begin{equation}\label{eq:ab-definition}
 a=2r,\qquad b=2s+1,\qquad n=p^aq^b,
\end{equation}
under the hypotheses of Theorem~\ref{thm:main}.  Encode a divisor set by
$X$ as in \eqref{eq:X-definition}; in particular $x_{ab}=0$.  Let
$\mathbf J$ denote the $(a+1)\times(b+1)$ all-one matrix, and define the
associated sign matrix by centring:
\begin{equation}\label{eq:center-X}
 Y=2X-\mathbf J\in\{\pm1\}^{(a+1)\times(b+1)},\qquad
 X=\frac12(\mathbf J+Y).
\end{equation}
This centring separates the all-one contribution, which is explicit by the
row-sum identity, from a sign-matrix term to which
Theorem~\ref{thm:parity} can be applied.
Put
\begin{equation}\label{eq:MN-Deltas}
 M=T_a(p),\quad N=T_b(q),\quad
 \Delta_p=\Delta_a(p),\quad \Delta_q=\Delta_b(q).
\end{equation}
Write $\Energy_X=\Energy(\ICG_n(\mathcal D))$.  By the row-sum identity
\eqref{eq:T-symmetric-rowsum},
$M\one=p^ae_a$ and $N\one=q^be_b$.  Since
$\mathbf J=\one\one^{\mathsf T}$,
\begin{equation}\label{eq:mean-term}
 M\mathbf JN^{\mathsf T}
 =(M\one)(N\one)^{\mathsf T}
 =p^aq^be_ae_b^{\mathsf T}.
\end{equation}
Combining \eqref{eq:center-X}, \eqref{eq:mean-term}, and
Proposition~\ref{prop:energy-model} gives
\begin{align}
 2\Energy_X
 &=\left\lVert p^aq^be_ae_b^{\mathsf T}+MYN^{\mathsf T}\right\rVert_1
 \notag\\
 &\leq p^aq^b+\lVert MYN^{\mathsf T}\rVert_1.
\label{eq:outer-triangle}
\end{align}

\begin{lemma}\label{lem:centered-tensor}
For every $Y\in\{\pm1\}^{(a+1)\times(b+1)}$,
\begin{equation}\label{eq:centered-tensor-bound}
 \lVert MYN^{\mathsf T}\rVert_1\leq d_a(p)d_b(q).
\end{equation}
Equality holds if and only if
\begin{equation}\label{eq:centered-equality}
 Y=\pm s_as_b^{\mathsf T}.
\end{equation}
\end{lemma}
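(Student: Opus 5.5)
The plan is to rewrite $\lVert MYN^{\mathsf T}\rVert_1$ as a bilinear form in two $\pm1$ vectors and bound it using the two semidefinite sandwiches from Theorem~\ref{thm:parity}, combined through a Kronecker product. For any real matrix $Z$ we have $\lVert Z\rVert_1=\max_{\Sigma}\langle\Sigma,Z\rangle$, the maximum taken over $\Sigma\in\{\pm1\}^{(a+1)\times(b+1)}$. Put $y=\operatorname{vec}(Y)$, $z=\operatorname{vec}(\Sigma)$ and $K=N\otimes M$. Since $M$ and $N$ are symmetric by \eqref{eq:T-symmetric-rowsum}, this gives $\langle\Sigma,MYN^{\mathsf T}\rangle=z^{\mathsf T}Ky$. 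Put also $\Delta=\Delta_q\otimes\Delta_p$. This is a positive diagonal matrix, because $\Delta_a(p)$ and $\Delta_b(q)$ are positive diagonal in the stated ranges ($a$ even with $p\geq3$, $b$ odd with $q\geq5$). Moreover $\tr\Delta=\tr\Delta_p\,\tr\Delta_q=d_a(p)d_b(q)$ by \eqref{eq:dk-trace}.

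The key step is the operator bound $-\Delta\preceq K\preceq\Delta$. Normalise by setting $A=\Delta_p^{-1/2}M\Delta_p^{-1/2}$ and $B=\Delta_q^{-1/2}N\Delta_q^{-1/2}$.
\begin{itemize}
\item Since $a$ is even, Theorem~\ref{thm:parity} gives $\Delta_p-M\succeq0$ and $\Delta_p+M\succ0$. Hence the spectrum of $A$ lies in $(-1,1]$, and the eigenvalue $1$ is simple with eigenvector $\Delta_p^{1/2}s_a$.
\item Since $b$ is odd, Theorem~\ref{thm:parity} gives $\Delta_q+N\succeq0$ and $\Delta_q-N\succ0$. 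Hence the spectrum of $B$ lies in $[-1,1)$, and the eigenvalue $-1$ is simple with eigenvector $\Delta_q^{1/2}s_b$.
\end{itemize}
The eigenvalues of $B\otimes A$ are the products of eigenvalues of $B$ and $A$, so $\lVert B\otimes A\rVert\leq1$. Since $\Delta^{-1/2}K\Delta^{-1/2}=B\otimes A$, Cauchy--Schwarz gives
\[
 |z^{\mathsf T}Ky|=\bigl|(\Delta^{1/2}z)^{\mathsf T}(B\otimes A)(\Delta^{1/2}y)\bigr|
 \leq\lVert\Delta^{1/2}z\rVert\,\lVert\Delta^{1/2}y\rVert .
\]
Because all entries of $y$ and $z$ are $\pm1$, both norms on the right equal $(\tr\Delta)^{1/2}$. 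Therefore $|z^{\mathsf T}Ky|\leq\tr\Delta=d_a(p)d_b(q)$, and maximising over $\Sigma$ yields \eqref{eq:centered-tensor-bound}.

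For the equality case, suppose equality holds for some $\Sigma$. Put $w=\Delta^{1/2}y$. Equality in the chain above forces $\lVert(B\otimes A)w\rVert=\lVert w\rVert$. Since $B\otimes A$ is symmetric with spectral norm at most $1$, this means $w$ lies in the span of eigenvectors of $B\otimes A$ with eigenvalue $\pm1$. A product $\mu\lambda$ with $\mu\in[-1,1)$ and $\lambda\in(-1,1]$ has modulus $1$ only when $\mu=-1$ and $\lambda=1$. So the only such eigenvalue is $-1$, and its eigenspace is spanned by $\Delta^{1/2}(s_b\otimes s_a)$. Hence $y$ is a multiple of $s_b\otimes s_a=\operatorname{vec}(s_as_b^{\mathsf T})$. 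Both vectors have $\pm1$ entries, so $Y=\pm s_as_b^{\mathsf T}$.

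Conversely, take $Y=\pm s_as_b^{\mathsf T}$. The last identity of Theorem~\ref{thm:parity} gives $Ms_a=\Delta_ps_a$ and $Ns_b=-\Delta_qs_b$. Therefore
\[
 MYN^{\mathsf T}=\mp(\Delta_ps_a)(\Delta_qs_b)^{\mathsf T},
\]
and since both $\Delta$'s are positive diagonal its $\ell_1$-norm is $\tr\Delta_p\,\tr\Delta_q$. This settles \eqref{eq:centered-equality}.

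The main obstacle is the spectral bookkeeping in the equality case. One must use the strict definiteness statements ($\Delta_p+M\succ0$ and $\Delta_q-N\succ0$) to exclude the eigenvalue products $(+1)\cdot(+1)$ and $(-1)\cdot(-1)$; this is exactly where the opposite parities of $a$ and $b$ are used.
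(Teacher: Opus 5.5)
Your proof is correct and is essentially the paper's argument. Both derive the sandwich $-\Delta\preceq K\preceq\Delta$, with $\ker(\Delta+K)=\Span\{s_b\otimes s_a\}$ and $\Delta-K\succ0$, from the two parity regimes of Theorem~\ref{thm:parity}; both use $y^{\mathsf T}\Delta y=z^{\mathsf T}\Delta z=\tr\Delta$ for sign vectors; and both read off the equality case from the strict definiteness of $\Delta_p+M$ and $\Delta_q-N$. The difference is only mechanical: you normalise by $\Delta^{1/2}$ and use eigenvalue products of $B\otimes A$ with Cauchy--Schwarz, whereas the paper writes $\Delta\pm K$ as half-sums of Kronecker products of semidefinite factors and uses the exact deficit identity $d_a(p)d_b(q)-z^{\mathsf T}Ky=\tfrac14\bigl[(z-y)^{\mathsf T}(\Delta+K)(z-y)+(z+y)^{\mathsf T}(\Delta-K)(z+y)\bigr]$.
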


\begin{proof}
We use column vectorisation, so by the standard vec--Kronecker identity,
\begin{equation}\label{eq:vec-convention}
 \operatorname{vec}(MYN^{\mathsf T})
 =(N\otimes M)\operatorname{vec}(Y).
\end{equation}
Set
\begin{equation}\label{eq:K-Delta}
 K=N\otimes M,\qquad \Delta=\Delta_q\otimes\Delta_p.
\end{equation}
By \eqref{eq:T-symmetric-rowsum}, $M$ and $N$ are symmetric, and hence so is $K$.
Here $a=2r\geq2$ and $p\geq3$, while $b=2s+1\geq1$ and $q\geq5$.
Thus the even and odd conclusions of Theorem~\ref{thm:parity} give
\begin{align}
 &\Delta_p-M\succeq0,\qquad
 \ker(\Delta_p-M)=\Span\{s_a\},\qquad
 \Delta_p+M\succ0,\notag\\
 &\Delta_q+N\succeq0,\qquad
 \ker(\Delta_q+N)=\Span\{s_b\},\qquad
 \Delta_q-N\succ0.
\label{eq:parity-inputs}
\end{align}
In these parameter ranges, \eqref{eq:delta-definition}--\eqref{eq:delta-last}
also show that $\Delta_p$ and $\Delta_q$ are positive diagonal matrices.
Expanding tensor products yields
\begin{align}
 \Delta+K
 &=\frac12\bigl[(\Delta_q+N)\otimes(\Delta_p+M)
 +(\Delta_q-N)\otimes(\Delta_p-M)\bigr],
 \label{eq:kron-plus}\\
 \Delta-K
 &=\frac12\bigl[(\Delta_q+N)\otimes(\Delta_p-M)
 +(\Delta_q-N)\otimes(\Delta_p+M)\bigr].
 \label{eq:kron-minus}
\end{align}
By \eqref{eq:parity-inputs}, both summands in \eqref{eq:kron-plus} are
positive semidefinite.  Since $\Delta_p+M$ and $\Delta_q-N$ are positive
definite, the two kernels are
\begin{align}
 \ker\bigl((\Delta_q+N)\otimes(\Delta_p+M)\bigr)
 &=\Span\{s_b\}\otimes\R^{a+1},\notag\\
 \ker\bigl((\Delta_q-N)\otimes(\Delta_p-M)\bigr)
 &=\R^{b+1}\otimes\Span\{s_a\}.
\label{eq:kron-summand-kernels}
\end{align}
For positive-semidefinite matrices, the kernel of a sum is the intersection
of the kernels.  Hence
\begin{align}
 \Delta+K&\succeq0,\notag\\
 \ker(\Delta+K)
 &=\bigl(\Span\{s_b\}\otimes\R^{a+1}\bigr)
   \cap\bigl(\R^{b+1}\otimes\Span\{s_a\}\bigr)\notag\\
 &=\Span\{s_b\otimes s_a\}.
\label{eq:kron-kernel}
\end{align}
In \eqref{eq:kron-minus}, the first summand is positive semidefinite and the
second is a tensor product of positive-definite matrices.  Therefore
\begin{equation}\label{eq:kron-minus-pd}
 \Delta-K\succ0.
\end{equation}

Let $y=\operatorname{vec}(Y)$.  Under the column-vectorisation convention,
$\lVert\operatorname{vec}(Z)\rVert_1=\lVert Z\rVert_1$ for every matrix
$Z$.  Choose a sign vector $z$ so that $z_i=\sgn((Ky)_i)$ when
$(Ky)_i\neq0$, choosing either sign at a zero coordinate.  Then
\begin{equation}\label{eq:z-dual}
 z^{\mathsf T}Ky=\lVert Ky\rVert_1.
\end{equation}
Since $\Delta$ is diagonal and $y,z$ are sign vectors,
\begin{equation}\label{eq:Delta-trace-signs}
 y^{\mathsf T}\Delta y=z^{\mathsf T}\Delta z
 =\tr\Delta=d_a(p)d_b(q),
\end{equation}
where $\tr(A\otimes B)=\tr(A)\tr(B)$ and, by
\eqref{eq:dk-trace},
$\tr\Delta_a(p)=d_a(p)$ and $\tr\Delta_b(q)=d_b(q)$.
Using \eqref{eq:Delta-trace-signs}, the symmetry of $K$, and a direct expansion, we obtain
\begin{align}
 d_a(p)d_b(q)-z^{\mathsf T}Ky
 =\frac14\bigl[&
 (z-y)^{\mathsf T}(\Delta+K)(z-y)\notag\\
 &+(z+y)^{\mathsf T}(\Delta-K)(z+y)\bigr].
\label{eq:deficit-identity}
\end{align}
Equations \eqref{eq:kron-kernel}--\eqref{eq:kron-minus-pd} show that the
right-hand side is nonnegative.  By \eqref{eq:z-dual} and
\eqref{eq:vec-convention},
$z^{\mathsf T}Ky=\lVert Ky\rVert_1=\lVert MYN^{\mathsf T}\rVert_1$,
which proves \eqref{eq:centered-tensor-bound}.

If equality holds, then the right-hand side of \eqref{eq:deficit-identity} is zero.  Since both quadratic forms there are nonnegative, each must vanish.  Positive definiteness in \eqref{eq:kron-minus-pd} therefore forces
$z+y=0$, while \eqref{eq:kron-kernel} gives
$z-y\in\Span\{s_b\otimes s_a\}$.  Since $z=-y$,
$-2y=z-y\in\Span\{s_b\otimes s_a\}$, so
$y=\lambda(s_b\otimes s_a)$ for some scalar $\lambda$.  Since both $y$
and $s_b\otimes s_a$ are sign vectors, $\lambda=\pm1$.  Thus
$y=\pm(s_b\otimes s_a)$.  Under column vectorisation,
$\operatorname{vec}(s_as_b^{\mathsf T})=s_b\otimes s_a$, so this is
exactly \eqref{eq:centered-equality}.

Conversely, the parity identity \eqref{eq:parity-identity}, applied with
$a$ even and $b$ odd, gives
\begin{equation}\label{eq:parity-actions}
 Ms_a=\Delta_ps_a,\qquad Ns_b=-\Delta_qs_b.
\end{equation}
Therefore, for either choice of sign in \eqref{eq:centered-equality}, the
rank-one identity for the entrywise $\ell_1$-norm and the positivity of the
diagonal matrices $\Delta_p$ and $\Delta_q$ give
\begin{equation}\label{eq:checkerboard-centered-norm}
 \begin{aligned}
 \bigl\lVert M(s_as_b^{\mathsf T})N^{\mathsf T}\bigr\rVert_1
 &=\lVert\Delta_ps_a\rVert_1\,\lVert\Delta_qs_b\rVert_1\\
 &=\tr\Delta_p\,\tr\Delta_q
 =d_a(p)d_b(q).
 \end{aligned}
\end{equation}
Thus equality holds in \eqref{eq:centered-tensor-bound}.
\end{proof}

\begin{proof}[Proof of Theorem~\ref{thm:main}]
Combining \eqref{eq:outer-triangle} with
Lemma~\ref{lem:centered-tensor} gives
\begin{equation}\label{eq:energy-upper}
 \Energy_X\leq\frac12\bigl[p^aq^b+d_a(p)d_b(q)\bigr].
\end{equation}
Define the checkerboard matrix
\begin{equation}\label{eq:Xstar-general}
 X^*=\frac12\bigl(\mathbf J+s_as_b^{\mathsf T}\bigr).
\end{equation}
Since $(s_as_b^{\mathsf T})_{ij}=(-1)^{i+j}$,
$x^*_{ij}=\frac12(1+(-1)^{i+j})$, so $x^*_{ij}=1$ precisely when $i+j$
is even.  Moreover, $a+b$ is odd, and hence
\begin{equation}\label{eq:Xstar-corner}
 (s_as_b^{\mathsf T})_{ab}=(-1)^{a+b}=-1,\qquad x^*_{ab}=0.
\end{equation}
Also $x^*_{00}=1$.  Thus $X^*$ encodes the nonempty set of proper divisors
\eqref{eq:main-Dstar}.

For $X^*$, the associated sign matrix in \eqref{eq:center-X} is
$Y=s_as_b^{\mathsf T}$, so equality holds in
\eqref{eq:centered-tensor-bound} by Lemma~\ref{lem:centered-tensor}.  It
remains to verify that equality also holds in the triangle inequality in
\eqref{eq:outer-triangle}.  This requires that no cancellation occur between
$p^aq^b e_ae_b^{\mathsf T}$ and $M(s_as_b^{\mathsf T})N^{\mathsf T}$ at
any coordinate where both are nonzero.  By \eqref{eq:mean-term}, the first
matrix is zero except at $(a,b)$, where its entry is $p^aq^b>0$.  Hence it
suffices to check that the $(a,b)$-entry of
\begin{equation}\label{eq:center-rank-one}
 M(s_as_b^{\mathsf T})N^{\mathsf T}=(Ms_a)(Ns_b)^{\mathsf T}
\end{equation}
is positive.  By \eqref{eq:parity-actions} and the parity of $a$ and $b$,
\begin{equation}\label{eq:overlap-positive}
 (Ms_a)_a=(\Delta_p)_{aa}>0,\qquad
 (Ns_b)_b=-(\Delta_q)_{bb}(-1)^b=(\Delta_q)_{bb}>0.
\end{equation}
Thus no cancellation occurs at the only coordinate where the two matrices are
both nonzero.  Equality therefore holds in \eqref{eq:outer-triangle}, and
$X^*$ attains the bound in \eqref{eq:energy-upper}.

If another divisor matrix $X$ encoding a nonempty set of proper divisors
also attains the bound in \eqref{eq:energy-upper}, then both inequalities
leading to \eqref{eq:energy-upper} are equalities.  By
\eqref{eq:centered-equality}, $Y=\pm s_as_b^{\mathsf T}$.  From
\eqref{eq:center-X}, the proper-divisor condition $x_{ab}=0$ is equivalent to
$y_{ab}=-1$.  However, by \eqref{eq:Xstar-corner},
$(-s_as_b^{\mathsf T})_{ab}=1$, so the negative choice is impossible.  Hence
$Y=s_as_b^{\mathsf T}$ and $X=X^*$.
Together with \eqref{eq:ab-definition}, equality in
\eqref{eq:energy-upper} gives \eqref{eq:main-energy}, with the explicit
form of $d_k$ given by \eqref{eq:dk-formula}.
\end{proof}

\begin{remark}\label{rem:p2q3-relation}
Taking $r=s=1$ in Theorem~\ref{thm:main} gives
\[
 \{1,p^2,pq,q^2,p^2q^2,pq^3\}
\]
as the unique energy-maximising divisor set for every pair of distinct odd
primes with $q\geq5$, together with Rold\'an's closed-form energy formula
\cite[Theorem~1.2]{roldan2026graph}.
The exceptional prime $q=3$ is not covered by the present theorem.  Indeed,
for odd $k$ the height-one tent polynomial satisfies
\[
 A_{k,1}(3)=-1,
\]
so the uniform odd-exponent semidefinite argument developed here does not
extend to $x=3$.
\end{remark}

\appendix

\section{Endpoint block identities}
\label{app:endpoints}

This appendix records the endpoint calculations used in
Section~\ref{sec:parity}.  We retain the notation of that section, including
the convention of suppressing the dependence on $x$ when no confusion can
arise; in particular, $L=x-1$, $\alpha=x/L$, $\beta=1/L$, and
$\eps\in\{1,-1\}$.  The same nonterminal recurrence applies in the odd
and even cases.

Let $k=2m+1$ with $m\geq1$, and assume $x\geq5$.  From
\eqref{eq:G-bands}--\eqref{eq:CBCT}, under the endpoint pairing
$(0,k),(1,k-1),\ldots,(m,m+1)$, the last diagonal block of
$\mathcal Q_k^\eps$ is
\begin{equation}\label{eq:appendix-odd-Dm}
 D_m^\eps=
 \begin{pmatrix}
 b_m+b_{m+1}-\eps\alpha&
 -b_{m+1}+\eps(\alpha+\beta)\\
 -b_{m+1}+\eps(\alpha+\beta)&
 b_{m+1}+b_{m+2}-\eps\beta
 \end{pmatrix}.
\end{equation}
Its coupling to the preceding block is \eqref{eq:Eh-block} with
$h=m-1$.  Put $c=c_{m-1}$, $g=g_{m-1}$,
$a=a_{m-1}$, and $u=b_m$.  By Lemma~\ref{lem:universal-schur},
$S_{m-1}^\eps$ is invertible.  By \eqref{eq:b-entries},
$b_{m+2}=c$.  Hence \eqref{eq:generic-inverse-coupling} and
\eqref{eq:generic-column-cancellation} give
\begin{equation}\label{eq:appendix-odd-correction}
 (E_{m-1}^\eps)^{\mathsf T}
 (S_{m-1}^\eps)^{-1}E_{m-1}^\eps
 =
 \begin{pmatrix}
 \rho&\eps\beta\\
 \eps\beta&c
 \end{pmatrix},
 \qquad
 \rho=\frac{cu^2-2u\alpha\beta+a\beta^2}{g}.
\end{equation}
Therefore the Schur complement of $S_{m-1}^\eps$ is
\begin{equation}\label{eq:appendix-odd-Schur}
 \begin{aligned}
 S_m^\eps
 &=
 D_m^\eps-
 (E_{m-1}^\eps)^{\mathsf T}
 (S_{m-1}^\eps)^{-1}E_{m-1}^\eps\\
 &=
 \begin{pmatrix}
 b_m+b_{m+1}-\eps\alpha-\rho&
 -b_{m+1}+\eps\alpha\\
 -b_{m+1}+\eps\alpha&
 b_{m+1}-\eps\beta
 \end{pmatrix}.
 \end{aligned}
\end{equation}

For $\eps=1$, \eqref{eq:b-entries} and
\eqref{eq:F-definition} give
\begin{equation}\label{eq:appendix-odd-plus-entries}
 \begin{aligned}
 (S_m^+)_{22}
 &=
 b_{m+1}-\beta
 =
 \frac{2R_m-x^m}{x^m(x-1)}
 =
 \frac{F_m}{x^m(x-1)},\\
 (S_m^+)_{12}
 &=
 -b_{m+1}+\alpha
 =
 \frac{x^{m+1}-2R_m}{x^m(x-1)}
 =
 \frac{F_{m+1}}{x^m(x-1)}.
 \end{aligned}
\end{equation}
Here the last equality in the second line follows from
$F_{m+1}=x^{m+1}-2R_m$, which follows from
\eqref{eq:R-recurrence} and \eqref{eq:F-definition}.

Since $k$ is odd, \eqref{eq:parity-identity} gives
$(\Delta_k+T_k)s_k=0$.  By
\eqref{eq:combined-congruence} and
\eqref{eq:common-congruent-matrix},
$\mathcal Q_k^+$ is therefore singular.  The simultaneous row-and-column permutation used in the endpoint pairing
and each successive Schur elimination are implemented by invertible matrices
whose determinants have absolute value one, and hence preserve the determinant.
After the successive eliminations, the resulting block-diagonal matrix is
\[
S_0^+\oplus S_1^+\oplus\cdots\oplus S_m^+.
\]
Therefore,
\begin{equation}
\det\mathcal Q_k^+
=
\left(\prod_{h=0}^{m-1}\det S_h^+\right)\det S_m^+
=
\left(\prod_{h=0}^{m-1}g_h\right)\det S_m^+.
\label{eq:appendix-odd-det-factorisation}
\end{equation}
Since $g_h>0$ for $0\leq h<m$, it follows that
$\det S_m^+=0$.  Moreover, $F_m>0$, so
$(S_m^+)_{22}=F_m/[x^m(x-1)]$ is nonzero.  Hence the zero determinant,
together with \eqref{eq:appendix-odd-plus-entries}, determines the
remaining entry and gives the matrix in \eqref{eq:odd-final-plus}.

For $\eps=-1$, comparing \eqref{eq:appendix-odd-Schur} at
$\eps=-1$ and $\eps=1$ gives
\begin{equation}\label{eq:appendix-odd-sign-change}
 S_m^-
 =
 S_m^+
 +
 2\begin{pmatrix}
 \alpha&-\alpha\\
 -\alpha&\beta
 \end{pmatrix}.
\end{equation}
Using \eqref{eq:odd-final-plus},
\eqref{eq:appendix-odd-sign-change},
\eqref{eq:R-recurrence}, and
\eqref{eq:F-definition}, we obtain
\begin{equation}\label{eq:appendix-odd-minus-entries}
 \begin{aligned}
 (S_m^-)_{22}
 &=
 \frac{x^m+2R_m}{x^m(x-1)}
 =
 C_m^*,\\
 (S_m^-)_{12}
 &=
 -\frac{x^{m+1}+2R_m}{x^m(x-1)}
 =
 B_m^*,
 \end{aligned}
\end{equation}
where $C_m^*$ and $B_m^*$ are defined in
\eqref{eq:odd-final-parameters}.

Furthermore, \eqref{eq:R-recurrence} and
\eqref{eq:F-definition} give
$F_{m+1}+F_m=x^m(x-1)$.  Using this identity together with
\eqref{eq:odd-final-plus} and \eqref{eq:appendix-odd-sign-change}
yields
\begin{equation}\label{eq:appendix-odd-minus-determinant}
 \det S_m^-
 =
 \frac{
 2\bigl(x^{2m}(x-1)-2x^mF_m-F_m^2\bigr)
 }{
 x^m(x-1)F_m
 }.
\end{equation}
Since $F_m=2R_m-x^m$ by \eqref{eq:F-definition}, the numerator in
parentheses satisfies
\begin{equation}\label{eq:appendix-odd-tent-identity}
 x^{2m}(x-1)-2x^mF_m-F_m^2
 =x^{2m+1}-4R_m^2
 =A_{k,m+1},
\end{equation}
where the last equality is \eqref{eq:tent-product-identity} with
$k=2m+1$ and $h=m+1$.  Hence
\begin{equation}\label{eq:appendix-odd-minus-det-final}
 \det S_m^-
 =
 \frac{2A_{k,m+1}}{x^m(x-1)F_m}
 =
 G_m^*,
\end{equation}
where $G_m^*$ is defined in \eqref{eq:odd-final-parameters}.  Since
$R_m>0$, we have $C_m^*>0$.  Thus
\eqref{eq:appendix-odd-minus-entries} and
\eqref{eq:appendix-odd-minus-det-final} determine the remaining entry and
give \eqref{eq:odd-final-minus}.

For $k=2m$ with $m\geq1$, assume $x\geq3$.  The paired order is
$(0,k),(1,k-1),\ldots,(m-1,m+1),m$.  From
\eqref{eq:G-bands}--\eqref{eq:CBCT}, before eliminating the final pair,
the $2\times1$ off-diagonal block between that pair and the central
singleton and the $1\times1$ central diagonal block are, respectively,
\begin{equation}\label{eq:appendix-even-endpoint}
 \begin{pmatrix}
 -b_m-\eps\alpha\\
 -b_{m+1}-\eps\beta
 \end{pmatrix},
 \qquad
 b_m+b_{m+1}+\eps(\alpha+\beta).
\end{equation}
With the abbreviations in \eqref{eq:even-end-abbreviations},
\eqref{eq:b-entries} gives $b_{m+1}=c$, so
the two expressions in \eqref{eq:appendix-even-endpoint} are precisely
$v_\eps$ and $\eta_\eps$ in \eqref{eq:even-final-data}.

Write $S=S_{m-1}^\eps$ and decompose
$v_\eps=e+d$, where
$e=(-\eps\alpha,-c)^{\mathsf T}$ and
$d=(-B,-\eps\beta)^{\mathsf T}$.  Since $S$ is a nonterminal
positive-definite Schur block, it is invertible.  By
\eqref{eq:generic-column-cancellation},
$S^{-1}e=(0,-1)^{\mathsf T}$, and hence
$e^{\mathsf T}S^{-1}e=c$ and
$d^{\mathsf T}S^{-1}e=\eps\beta$.
Using the inverse in \eqref{eq:generic-inverse-coupling} and
$\det S=g$ also gives
$d^{\mathsf T}S^{-1}d
=\beta^2/c+(cB-\alpha\beta)^2/(cg)$.
Consequently,
\begin{equation}\label{eq:appendix-even-quadratic}
 v_\eps^{\mathsf T}S^{-1}v_\eps
 =
 c+2\eps\beta
 +\frac{\beta^2}{c}
 +\frac{(cB-\alpha\beta)^2}{cg}.
\end{equation}
Substituting \eqref{eq:appendix-even-quadratic} and
$\eta_\eps=B+c+\eps(\alpha+\beta)$ from
\eqref{eq:even-final-data} into the scalar Schur complement, and using
$\alpha-\beta=1$, gives \eqref{eq:even-sigma}.  Thus the even endpoint
uses the same nonterminal recurrence as the odd case; the remaining sign
dependence is confined to the final scalar term.

\section*{Declaration of competing interest}
The authors declare that they have no known competing financial interests or personal relationships that could have appeared to influence the work reported in this paper.

\section*{Data availability}
No data was used for the research described in the article.

\section*{Acknowledgments}
This work was supported by the National Natural Science Foundation of China under Grant No.~12071351.

\section*{Declaration of generative AI and AI-assisted technologies in the manuscript preparation process}
After independently developing the main mathematical ideas underlying this work, the authors used ChatGPT (OpenAI) to assist with drafting portions of the text and polishing the language. They then extensively revised the AI-assisted text and made substantial additions to the manuscript. The authors carefully reviewed the final manuscript and take full responsibility for its content.

\end{document}